\newcommand{\R}{\mathbb{R}}
\newcommand{\Id}{\mathbb{I}}
\let\div\relax \DeclareMathOperator{\div}{div}
\DeclareMathOperator{\tr}{tr}
\DeclareMathOperator{\spanmo}{span}
\newcommand{\norm}[1]{\left\lVert#1\right\rVert}
\crefname{assumption}{Assumption}{Assumptions}
\title{High-order finite element methods for three-dimensional multicomponent
convection-diffusion\thanks{\funding{PEF was supported by EPSRC grants 
EP/R029423/1 and EP/W026163/1, and by the Donatio Universitatis
Carolinae Chair ``Mathematical modelling of multicomponent systems''. ABR was supported by a Clarendon scholarship from 
the University of Oxford.}}}
\author{Aaron Baier-Reinio\thanks{Mathematical Institute, University of Oxford, 
	Oxford, OX2 6GG, UK \\
	(\email{aaron.baier-reinio@maths.ox.ac.uk}).
	}
	\and Patrick E.~Farrell\thanks{Mathematical Institute, University of Oxford, Oxford, OX2 6GG, UK and
    Mathematical Institute, Faculty of Mathematics and Physics, Charles University, Czechia
    (\email{patrick.farrell@maths.ox.ac.uk}).
    }
    }
\begin{document}

\maketitle

\begin{abstract}
We derive and analyze a broad class of finite element methods for numerically
simulating the stationary, low Reynolds number flow of concentrated mixtures of
several distinct chemical species in a common thermodynamic phase.
The underlying partial differential equations that we discretize are the
Stokes--Onsager--Stefan--Maxwell (SOSM) equations, which model bulk momentum
transport and multicomponent diffusion within ideal and non-ideal mixtures.
Unlike previous approaches, the methods are straightforward to implement in two 
and three spatial dimensions, and allow for high-order finite element spaces to be 
employed.
The key idea in deriving the discretization is to suitably reformulate the SOSM
equations in terms of the species mass fluxes and chemical potentials, and
discretize these unknown fields using stable $H(\div)$--$L^2$ finite element pairs.
We prove that the methods are convergent and yield a symmetric linear system for a
Picard linearization of the SOSM equations, which staggers the updates for
concentrations and chemical potentials.
We also discuss how the proposed approach can be extended to the Newton 
linearization of the SOSM equations, which requires the simultaneous solution of
mole fractions, chemical potentials, and other variables.
Our theoretical results are supported by numerical experiments and we present an
example of a physical application involving the microfluidic non-ideal mixing of 
hydrocarbons.
\end{abstract}

\begin{keywords}
Multicomponent flows,
Stefan--Maxwell,
high-order finite element methods,
cross-diffusion,
compressible Stokes equations.
\end{keywords}

\begin{MSCcodes}
65N30, 76M10, 76T30
\end{MSCcodes}

\section{Introduction} \label{sec:introduction}

We consider the numerical simulation of
\textit{multicomponent fluids} (or \textit{mixtures}).
Mixtures are fluids comprised of $n\geq 2$ distinct chemical species (or 
\textit{components}) in a common thermodynamic phase (e.g.~liquid or gas).
The defining challenge in modelling mixtures is that the different components are 
not independent; they are instead coupled through physical processes such as 
diffusion or reaction.
Air, for example, is a mixture of oxygen, carbon dioxide, water vapour, nitrogen
and other species, and when modelling airflow in the lungs for heliox therapy,
it is important to track these different components and model their diffusive
interactions \cite{krishna2019diffusing}.

A common but limiting assumption in multicomponent fluid modelling is that
the mixture is in the \textit{dilute regime}.
In this regime the concentration of one component (the \textit{solvent})
dominates that of all others (the \textit{solutes}).
Consequently, the solvent velocity field can be solved for independently of the
solutes, using, for example, the Navier--Stokes equations.
The solute concentrations can then be modelled using decoupled
advection-diffusion equations in which the solvent velocity field is used for 
advection and the diffusive fluxes are modelled using Fick's law
\cite{fick1855ueber}.
However, in \textit{non-dilute} (or \textit{concentrated}) mixtures, this approach
can fail drastically.
In the concentrated regime all components are present in similar amounts, and
Fick's law breaks down as it fails capture the appreciable diffusional forces 
that the components can exert on one another 
\cite{krishna2019diffusing,krishna1997maxwell,wesselingh2000mass}.
This behavior, known as \textit{cross-diffusion}
\cite{cances2023finite,jungel2015boundedness,sun2019entropy},
plays an important
role in many areas of science; examples include
physiology \cite{chang1975some},
combustion \cite{giovangigli2012multicomponent},
electrochemistry \cite{newman2021electrochemical},
geosciences \cite{thorstenson1989gas}
and separation processes \cite{wankat2022separation}.

In this work we derive and analyze finite element methods for capturing two
important physical phenomena in concentrated mixtures: bulk momentum
transport and cross-diffusion.
There is much existing numerical literature that considers these phenomena
separately, but as we will presently discuss, very little numerical literature
studies their coupling.
We consider low Reynolds number flow and model bulk momentum transport
using the Stokes equations; these are well-studied and we refer to
\cite{boffi2013mixed,ern2021finiteII,john2016finite} for expositions on
finite element methods for these equations.
The discretizations we introduce in this work allow for any conforming inf-sup
stable finite element pair for the velocity-pressure formulation of the Stokes
equations to be used.
Examples include the Taylor--Hood \cite{taylor1973numerical}
or Scott--Vogelius \cite{scott1985norm} pairs, on suitable meshes.

We model cross-diffusion using the \textit{Onsager--Stefan--Maxwell}%
\footnote{
Sometimes in the literature called the
\textit{generalized Stefan--Maxwell}
or \textit{Maxwell--Stefan} equations.}%
\textit{(OSM) equations}
\cite{bird2002transport,krishna2019diffusing,
krishna1997maxwell,newman2021electrochemical,newman1965mass,wesselingh2000mass}.
These are named after Onsager, who developed the theory
of irreversible thermodynamics
\cite{onsager1931reciprocal,onsager1931reciprocal2,onsager1945theories},
and Stefan and Maxwell, who independently derived the \textit{Stefan--Maxwell
(SM) equations} \cite{maxwell1866,stefan1871gleichgewicht}
which model cross-diffusion in ideal gaseous mixtures.
It was first realized in \cite{lightfoot1962applicability}
that Onsager's theory and the SM equations are compatible;
their reconciliation yields the more general OSM equations.
The OSM equations are applicable to gases and condensed phases, are valid
in the dilute and concentrated regimes, and can account for a range of
physical phenomena including pressure diffusion, thermal diffusion, thermodynamic
non-idealities and electrochemical effects
\cite{bird2002transport,newman2021electrochemical,wesselingh2000mass}.
The OSM equations also obey the second law of thermodynamics, which states
that entropy generation in a closed system is non-negative.
This ensures that the \textit{Onsager transport matrix}, defined in 
\cref{sec:continuous}, is positive semi-definite 
\cite{onsager1945theories,van2022augmented}, which will be
crucial in our mathematical analysis.

Finite element methods for the OSM equations are proposed and
analyzed in \cite{braukhoff2022entropy,carnes2008local,
jungel2019convergence,mcleod2014mixed,van2022augmented},
but these works assume an isobaric ideal gaseous mixture.
The methods proposed in this work are similar to that of
\cite{mcleod2014mixed}, as we discretize the species mass fluxes in $H(\div)$.
However, the formulation herein is more general than \cite{mcleod2014mixed}, as we
allow for the mixture to be non-ideal, in a gaseous or condensed phase, 
non-isobaric, and we consider the coupling to the Stokes equations.
In fact, the numerical analysis literature that studies the coupling of 
the OSM and Stokes (or Navier--Stokes) equations is sparse.
The only works we know of are
\cite{aznaran2024finite,burman2003bunsen,ern1998thermal,longo2012finite},
but apart from \cite{aznaran2024finite} these papers assume an ideal mixture,
with \cite{burman2003bunsen,ern1998thermal} considering gases and
\cite{aznaran2024finite,longo2012finite} considering both gases
and condensed phases.
Notably, in \cite{aznaran2024finite} the OSM equations are formulated and 
discretized in terms of the species chemical potentials.
This enables \cite{aznaran2024finite} to handle
non-ideal mixtures, and we follow the same approach in this work.

This paper represents a substantial extension of \cite{aznaran2024finite}.
We devise finite element methods for the coupled Stokes and OSM (SOSM) equations,
applicable to non-ideal gases and condensed phases.
To facilitate coupling of the Stokes and OSM equations, the methods in
\cite{aznaran2024finite} require divergence-conforming
symmetric stress elements, i.e.~elements discretizing $H(\div, \mathbb{S})$.
Such elements are difficult to implement
\cite{aznaran2022transformations}, limiting
\cite{aznaran2024finite} to two-dimensional low-order simulations.
We circumvent this challenge and derive a large class of
high-order methods in two and three spatial dimensions.
We do this by reformulating the SOSM equations:
in \cite{aznaran2024finite} the \textit{species velocities} are
discretized in $L^2$, while we instead discretize the
\textit{species mass fluxes} in $H(\div)$.
The $H(\div)$-regularity of the mass fluxes enables coupling of the OSM
and Stokes equations without having to discretize the stress.
This choice of variables also means that the chemical potentials
act as Lagrange multipliers for enforcing the species molar continuity equations,
which leads to the requirement that the discretized mass flux and
chemical potential spaces form stable $H(\div)$--$L^2$ pairs.
Such pairs are standard in finite element software for mixed formulations of the 
Poisson problem; examples include the $\mathbb{BDM}_k$--$\mathbb{DG}_{k-1}$
\cite{brezzi1985two,nedelec1986new}
and $\mathbb{RT}_k$--$\mathbb{DG}_{k-1}$
\cite{raviart1977mixed} pairs.
Another benefit of our reformulation is that we seek the chemical
potentials and pressure in $L^2$, whereas \cite{aznaran2024finite} seeks these
unknowns in a solution-dependent Sobolev space.
This leads to a complicated requirement on the discretization
(see \cite[Assumption 4.2]{aznaran2024finite}) and it is not
clear how to satisfy this requirement when using high-order spaces.
Our formulation circumvents this challenge.

At a theoretical level, we analyze a Picard linearization
of the SOSM equations, which adopts a Gau\ss--Seidel approach by 
segregating the solution of the concentrations from the chemical potentials and 
other variables.
This linearization, which remarkably leads 
to a symmetric system, was introduced in
\cite{aznaran2024finite}, but the variational formulation and discretization
studied here is different.
We establish well-posedness of our formulation of the linearized SOSM 
problem in the continuous and discrete setting.
We also show that, in this linearized setting, our finite element schemes
are quasi-optimal.

The Picard iteration is convenient for analysis, but it can be slow to converge 
to a root and for challenging problems may diverge altogether.
Newton's method \cite{deuflhard2011newton} provides an alternative, but it
requires the solution of larger systems, as the Gau\ss--Seidel staggering is no 
longer possible.
Newton's method can converge to roots quickly and robustly, but it brings 
several major challenges.
Firstly, it is necessary to preserve invariance of the discretization with respect 
to additive constants in the pressure space; doing so requires extra terms in 
the discretization.
We refer to these as density consistency terms
and we find that neglecting them prevents Newton's method from converging.
Secondly, for well-posedness of the stationary nonlinear SOSM problem it is 
necessary to impose constraints on the total number of moles of the species.
Constraints of this form have a clear physical interpretation and can be 
found in the literature on stationary multicomponent flows
(see e.g.~\cite{bulivcek2022existence,giovangigli2015steady}
and also \cite{ern2012mathematical,joubaud2014numerical}).
Each constraint introduces a dense row in the Jacobian matrix; we solve this 
efficiently using the Woodbury formula \cite{hager1989updating} 
on an auxiliary sparse Jacobian.
Together, these adaptations enable the efficient solution of the nonlinear 
SOSM equations with Newton's method for the first time.
A disadvantage of our methods is that, for the nonlinear SOSM problem,  
we empirically observe that the methods converge
sub-optimally in space by one order in the mesh size $h$.
This seems to be due to terms that arise from the coupling
of the Stokes and OSM equations, and it is unclear how to circumvent this 
behavior.

The rest of this paper is organized as follows.
In \cref{sec:continuous} we derive the variational formulation of the
Picard linearized SOSM problem and establish well-posedness at the continuous
level.
In \cref{sec:discretization} we derive the finite element methods and establish
(linearized) discrete well-posedness and quasi-optimality.
In \cref{sec:newton} we outline how the discretizations can be used in conjunction 
with Newton's method to solve the nonlinear SOSM problem.
Numerical experiments are given in \cref{sec:numerical}
and conclusions are drawn in \cref{sec:conclusions}.

\section{Continuous problem and Picard linearization} \label{sec:continuous}

Consider a stationary isothermal mixture of $n \geq 2$ distinct species
in a common thermodynamic phase.
We assume the spatial domain $\Omega \subset \R^d$ ($d \in \{ 2, 3 \}$)
is bounded, connected, and Lipschitz.

\subsection{Notation and governing equations}

Associated to species $i \in \{1, \ldots, n\}$ is its molar concentration
$c_i : \Omega \to \R^{+}$ and velocity $v_i : \Omega \to \R^d$.
The molar mass of species $i$ is denoted by $M_i > 0$ and is a known constant.
The molar flux of species $i$ is $c_i v_i$ and its mass flux is
$M_i c_i v_i$.
We shall primarily work with mass fluxes, which we denote by
$J_i := M_i c_i v_i$.
The stationary molar continuity equation for species $i$ reads
\begin{equation} \label{eq:cty}
	\div (v_i c_i) = r_i
	\quad \textrm{in} \ \Omega,
\end{equation}
with $r_i : \Omega \to \R$ a prescribed reaction term.
The total concentration of the mixture is
$c_T := \textstyle\sum_{j=1}^n c_j$ and the density is
$\rho := \textstyle\sum_{j=1}^n M_j c_j$.
The mass fraction of species $i$ is $\omega_i := M_i c_i / \rho$
and the mass-average (or barycentric) velocity of the mixture is
\begin{equation} \label{eq:massavg}
	v := \textstyle\sum_{j=1}^n \omega_j v_j = \textstyle\sum_{j=1}^n J_j / \rho.
\end{equation}
We refer to \cref{eq:massavg} as the \textit{mass-average constraint}.

We model bulk momentum transport using the steady Stokes momentum equation
\begin{equation} \label{eq:stokes}
	-\div \tau + \nabla p = \rho f
	\quad \textrm{in} \ \Omega,
\end{equation}
where $\tau : \Omega \to \R^{d \times d}_{\textrm{sym}}$ is the
viscous stress, $p : \Omega \to \R$ the pressure and
$f : \Omega \to \R^d$ a prescribed body force.
A derivation of \cref{eq:stokes} in the multicomponent setting is given in
\cite{goyal2017new}.
We assume that $\tau$ and $v$ are related through the Newtonian
constitutive law
\begin{equation} \label{eq:newtonian_const}
	\tau = 2 \eta \epsilon(v)
	+ \big( \zeta - 2 \eta / d \big) \tr(\epsilon(v)) \Id
	:= \mathscr{A}^{-1} \epsilon(v),
\end{equation}
where $\epsilon(v)$ denotes the symmetric gradient of $v$,
$\eta > 0$ is the shear viscosity, $\zeta > 0$ the bulk viscosity,
$\Id$ the $d \times d$ identity and
$\mathscr{A} : \R^{d \times d}_{\textrm{sym}} \to \R^{d \times d}_{\textrm{sym}}$ 
the compliance tensor.
We assume that $\eta$ and $\zeta$ are known constants.

We denote the chemical potential of species $i$ by $\mu_i : \Omega \to \R$.
The isothermal, non-isobaric OSM equations are given by
(see e.g.~\cite{bird2002transport})
\begin{equation} \label{eq:osm_eqns}
	-c_i \nabla \mu_i + \omega_i \nabla p = 
	\textstyle\sum_{j=1}^n \bm{M}_{ij} v_j
	\quad \textrm{in} \ \Omega
	\quad \forall i \in \{1, \ldots, n\}.
\end{equation}
The matrix $\bm{M}$ is called the Onsager transport matrix, and its 
entries are given by
\begin{equation} \label{eq:transport_m_def}
	\bm{M}_{ij} :=
	\begin{cases}
		-\frac{RT c_i c_j}{\mathscr{D}_{ij} c_T}
		& \text{if $i \neq j$}, \\
		\textstyle\sum_{k=1, k \neq i}^n \frac{RT c_i c_k}{\mathscr{D}_{ik} c_T}
		& \text{if $i = j$},
	\end{cases}
\end{equation}
where $R > 0$ is the ideal gas constant, $T > 0$ the temperature and
$\mathscr{D}_{ij}$ the Stefan--Maxwell diffusivities.
Note that $\mathscr{D}_{ii}$ is undefined and
$\mathscr{D}_{ij} = \mathscr{D}_{ji} \ \forall i \neq j$ \cite{bird2002transport}.
It follows that $\bm{M}$ is symmetric with $\sum_{j=1}^n \bm{M}_{ij} = 0 \ \forall 
i$.

Following \cite{aznaran2024finite} we shall assume that each
$\mathscr{D}_{ij}$ is a strictly positive constant,
which implies that $\bm{M}$ is positive-semidefinite, and has exactly one zero
eigenvalue (provided that all species concentrations are strictly positive).
More generally $\mathscr{D}_{ij}$ can be concentration or
pressure dependent \cite{kraaijeveld1993negative}, but studying this case
lies outside the scope of the present work.
However, we expect that under physically reasonable assumptions%
\footnote{%
Assuming that $\bm{M}$ remains positive-semidefinite with exactly one zero
eigenvalue, which was assumed by Onsager \cite{onsager1945theories} through 
consideration of the so-called \textit{dissipation-function}.
All of our Picard linearized analysis in 
\Cref{sec:linearized_wp} and \Cref{sec:discretization}
holds under these more general assumptions on $\bm{M}$ provided that the 
$\mathscr{D}_{ij}$ do not depend on pressure and depend continuously on the 
concentrations.}
on the spectrum of $\bm{M}$,
the numerical schemes proposed here would remain equally applicable in this more 
general setting.

\subsection{Thermodynamic constitutive law} \label{sec:thermo}
Fully describing the mixture requires a
thermodynamic constitutive law that expresses
the concentrations in terms of the pressure and chemical potentials.
We denote this by a map $\mathcal{C}$ which takes $n+1$ functions 
$\mu_1, \ldots, \mu_n, p : \Omega \to \R$
and produces $n$ functions
$c_1, \ldots, c_n : \Omega \to \R^+$.
Hence
\begin{equation} \label{eq:thermo_relation}
	(c_1, \ldots, c_n) = \mathcal{C}(\mu_1, \ldots, \mu_n, p).
\end{equation}
Importantly, the pressure and chemical potentials only appear in the SOSM
equations through their gradients (see \cref{eq:sosm_strong2} below),
and they do not appear in any of our boundary conditions.
Hence these fields are only determined up to additive constants%
\footnote{
These constants are undetermined in the present model,
but in some settings (e.g.~if one is interested in formation energies or 
entropies) they are fully determined and have physical importance.
}.
For $\mathcal{C}$ to be well-defined we require therefore that,
for all constants $C_1, \ldots, C_n, C_p$,
\begin{equation} \label{eq:thermo_wd}
	\mathcal{C}(\mu_1, \ldots, \mu_n, p) =
	\mathcal{C}(\mu_1 + C_1, \ldots, \mu_n + C_n, p + C_p).
\end{equation}

For an ideal gaseous mixture
(see e.g.~\cite{doble2007perry,guggenheim1967thermodynamics})
$\mathcal{C}$ can be defined by means of
\begin{equation} \label{eq:idealgas0}
	c_i = \frac{p^{\ominus}}{RT} 
	\exp \Big( \frac{\mu_i - \mu_i^{\ominus}}{RT} \Big)
	\quad \textrm{in} \ \Omega
	\quad \forall i \in \{ 1, \ldots, n \},
\end{equation}
where $\mu_1^{\ominus}, \ldots, \mu_n^{\ominus}$
are reference chemical potentials and $p^{\ominus}$ a reference pressure.
In general $\mu_i^{\ominus}$ depends on $T$ and $p^{\ominus}$ only.
Both $p^{\ominus}$ and $\mu_i^{\ominus}$ can be eliminated from 
\cref{eq:idealgas0}.
Indeed, since $\mu_i$ is undetermined up to additive constants,
\cref{eq:idealgas0} leaves $c_i$ undetermined up to positive multiplicative 
constants.
We can remove this indeterminacy by enforcing that the total number of moles
of species $i$ is a prescribed value $N_i > 0$,
\begin{equation} \label{eq:total_moles_constraint}
	\textstyle\int_{\Omega} c_i \mathop{\mathrm{d}x} = N_i
	\quad \forall i \in \{ 1, \ldots, n \}.
\end{equation}
With this indeterminacy removed, there is no loss of generality in expressing 
\cref{eq:idealgas0} as
\begin{equation} \label{eq:idealgas1}
	c_i = c^{\ominus} \exp \Big( \frac{\mu_i - \mu_i^{\textrm{aux}}}{RT} \Big)
	\quad \textrm{in} \ \Omega
	\quad \forall i \in \{ 1, \ldots, n \},
\end{equation}
where $\mu_i^{\textrm{aux}} \in \R$ is an auxiliary constant that is chosen so that
\cref{eq:total_moles_constraint} holds, and $c^{\ominus} > 0$ is any fixed constant
with units of concentration%
\footnote{
The actual value of $c^{\ominus}$ is irrelevant, since scaling
$c^{\ominus}$ is equivalent to shifting $\mu_i^{\textrm{aux}}$. We are being
pedantic and include it in \cref{eq:idealgas1} for dimensional consistency,
so that the right-hand side has units of concentration.}.
Defining $\mathcal{C}$ through \cref{eq:total_moles_constraint,eq:idealgas1},
we observe that \cref{eq:thermo_wd} is satisfied.
To motivate \cref{eq:total_moles_constraint}, note that in the transient 
setting, prescription of $\int_{\Omega} c_i \mathop{\mathrm{d}x}$ is unnecessary
since its value is determined by the transient molar continuity equation
$\partial_t c_i + \nabla \cdot (v_i c_i) = r_i$.
Indeed, integration yields
$\frac{\mathrm{d}}{\mathrm{d}t} \int_{\Omega} c_i \mathop{\mathrm{d}x}
= \int_{\Omega} [r_i - \nabla \cdot (v_i c_i)] \mathop{\mathrm{d}x}$,
so that time evolution of $\int_{\Omega} c_i \mathop{\mathrm{d}x}$
is determined by the flux $c_i v_i$, reaction term $r_i$ and initial 
condition on $c_i$.
However, in the steady case the molar continuity equation \cref{eq:cty}
no longer determines the total number of moles of species $i$ and it is 
natural to instead impose \cref{eq:total_moles_constraint} as an additional 
constraint.

In the completely general setting, where the mixture can be in a gaseous or
condensed phase and possibly non-ideal, one must consider the mole
fractions $x_i := c_i / c_T$.
A general constitutive law can be expressed using
$n$ partial molar Gibbs functions $G_i : \R^{n+2} \to \R$
and partial molar volume functions $V_i : \R^{n+2} \to \R$,
whose arguments are the state variables $T, p, x_1, \ldots, x_n$.
These functions are derived from partial derivatives of the Gibbs free energy
of the mixture (see e.g.~\cite{doble2007perry,guggenheim1967thermodynamics}),
and we assume that they are known.
The constitutive law is then an algebraic system of equations:
\begin{subequations} \label{eq:general_const_law}
\begin{alignat}{2}
	\mu_i - \mu_i^{\textrm{aux}} &= 
	G_i(T, p - p^{\textrm{aux}}, x_1, \ldots, x_n)
	&&\quad \textrm{in} \ \Omega
	\quad \forall i \in \{ 1, \ldots, n \},
	\label{eq:general_const_law_1} \\
	1 / c_T &=
	\textstyle\sum_{j=1}^n x_j V_j(T, p - p^{\textrm{aux}}, x_1, \ldots, x_n)
	&&\quad \textrm{in} \ \Omega, \label{eq:general_const_law_2}
\end{alignat}
\end{subequations}
where $\mu_i^{\textrm{aux}}, p^{\textrm{aux}} \in \R$ are auxiliary constants
reflecting the indeterminacy of $\mu_i, p$ up to additive constants.
These auxiliary constants play an analogous role to the $\mu_i^{\textrm{aux}}$
in \cref{eq:idealgas1}, but now in the setting of a general constitutive relation
formulated using mole fractions.
The system in \cref{eq:general_const_law_1} determines
$x_1, \ldots, x_n$, which can then be used in
\cref{eq:general_const_law_2} to determine $c_T$.
The concentrations are then given by $c_i = c_T x_i$.

By the Gibbs--Duhem relation, only $n-1$ of the equations in
\cref{eq:general_const_law_1} are independent
\cite{guggenheim1967thermodynamics}.
Likewise, only $n-1$ of the mole fractions are independent as
$\textstyle\sum_{j=1}^n x_j = 1$.
Therefore, although the system in
\cref{eq:general_const_law}
constitutes $n+1$ equations
in the $n+1$ unknowns $c_T, x_1, \ldots, x_n$,
only $n$ of these equations and $n$ of these unknowns are independent.
In particular, to remove the indeterminacy caused by the $n+1$ auxiliary constants
$\mu_1^{\textrm{aux}}, \ldots, \mu_n^{\textrm{aux}}, p^{\textrm{aux}}$,
it suffices to prescribe at most $n$ constraints.
A reasonable choice of constraints are those in \cref{eq:total_moles_constraint};
however, depending on the form of $G_i$ and $V_i$
it may not always be possible to satisfy these.
For example, if $V_1, \ldots, V_n$ are bounded below by a constant
$V > 0$ (this is the case in \cref{sec:benzene_cyclohexane}),
then multiplying \cref{eq:general_const_law_2} by $c_T$ and integrating
over $\Omega$ reveals that
$\int_{\Omega} 1 \mathop{\mathrm{d}x} \geq
V \textstyle\sum_{j=1}^n \int_{\Omega} c_j \mathop{\mathrm{d}x}$.
Clearly, in this case $\int_{\Omega} c_i \mathop{\mathrm{d}x}$
cannot be prescribed arbitrarily.
A related complication is that sometimes
it is only possible for $k < n$ constraints to be imposed
without overdetermining the system in \cref{eq:general_const_law}
(again, see \cref{sec:benzene_cyclohexane}).
Attempting to analyze exactly what constraints (and how many) can
legitimately be imposed in general is outside the scope of this work.
In what follows, we assume that an appropriate set of constraints have been
suitably chosen so that $\mathcal{C}$ is well-defined and satisfies
\cref{eq:thermo_wd}.

\subsection{Augmentation and change of variables}

An important property of the Onsager transport matrix $\bm{M}$
(recall \cref{eq:transport_m_def}) is that,
assuming the concentrations are strictly positive,
$\bm{M}$ is positive semi-definite
with nullspace spanned by $(1, \ldots, 1)^T$
\cite{onsager1945theories,van2022augmented}.
Positive semi-definiteness of $\bm{M}$ ensures that entropy generation is 
non-negative \cite[eq.~(1.13)]{van2022augmented},
while the nullspace of $\bm{M}$ ensures that bulk convection is
nondissipative \cite[eqs.~(1.15-1.16)]{aznaran2024finite}.
These properties of $\bm{M}$ are not only of physical importance; they have
crucial implications in our numerics, as we now describe.

In our forthcoming variational formulation,
positive semi-definiteness of $\bm{M}$ ensures that a
certain bilinear form is positive semi-definite.
However, for establishing well-posedness
we would prefer this bilinear form to be coercive.
We accomplish this using the augmentation strategy of
\cite[sect.~1.3]{aznaran2024finite}
(see also \cite{ern1994multicomponent,helfand1960inversion,van2022augmented}),
in which multiples of the mass-average constraint \cref{eq:massavg}
are added to the Stokes and OSM equations
\cref{eq:stokes,eq:osm_eqns}.
As in \cite{aznaran2024finite}, we also only strongly enforce the 
divergence of the mass-average constraint,
as this balances the number of equations and unknowns in the SOSM problem.
This leads to the augmented equations \cite[sect.~1.5]{aznaran2024finite}%
\begin{subequations} \label{eq:sosm_previous}
\begin{alignat}{3}
	-c_i \nabla \mu_i + \omega_i \nabla p  + \gamma \omega_i v
	&= \textstyle\sum_{j=1}^n \bm{M}_{ij}^{\gamma} v_j
	&&\quad \ \textrm{in} \ \Omega \quad \forall i,
	\label{eq:sosm_previous_a} \\
	-\div \tau + \nabla p
	+ \gamma v - \gamma \textstyle\sum_{j=1}^n \omega_j v_j
	&= \rho f
	&&\quad \ \textrm{in} \ \Omega,
	\label{eq:sosm_previous_b}  \\
	\div v 
	&= \div \big( \textstyle\sum_{j=1}^n \omega_j v_j \big)
	&&\quad \ \textrm{in} \ \Omega,
	\label{eq:sosm_previous_c}
\end{alignat}
\end{subequations}
where $\gamma > 0$ is a constant user-chosen augmentation parameter, and
\begin{equation} \label{eq:aug_transport_mat}
	\bm{M}^{\gamma}_{ij} := \bm{M}_{ij} + \gamma \omega_i \omega_j
\end{equation}
is an augmented transport matrix, which turns out to be
positive-definite \cite{aznaran2024finite,van2022augmented}.

As discussed in \cref{sec:introduction}, the formulation in
\cite{aznaran2024finite} discretizes the species velocities $v_i$ in $L^2$
which, in turn, requires the stress to be discretized in $H(\div, \mathbb{S})$.
This limits \cite{aznaran2024finite} to two-dimensional low-order simulations.
We circumvent this by reformulating the SOSM equations
in terms of the mass fluxes and we solve for these instead of the
velocities; this will allow us to eliminate the stress altogether.
It will be convenient to introduce the density reciprocal $\Psi := 1 / \rho$.
We divide the augmented OSM equations in \cref{eq:sosm_previous_a} by $M_i c_i$ as
this will lead to our forthcoming linearization to be symmetric;
written in terms of the mass fluxes, the augmented OSM equations are
\begin{equation*}
	-M_i^{-1} \nabla \mu_i + \Psi \nabla p + \gamma \Psi v
	= \textstyle\sum_{j=1}^n \widetilde{\bm{M}}^{\gamma}_{ij} J_j
	\quad \textrm{in} \ \Omega
	\quad \forall i \in \{1, \ldots, n\},
\end{equation*}
where $\widetilde{\bm{M}}^{\gamma}_{ij}$ is a scaling of
the augmented transport matrix in \cref{eq:aug_transport_mat},
\begin{equation} \label{eq:aug_transport_tilde}
	\widetilde{\bm{M}}^{\gamma}_{ij} := 
	\bm{M}^{\gamma}_{ij} / (M_i M_j c_i c_j)
	= \bm{M}_{ij} / (M_i M_j c_i c_j)
	+ \gamma \Psi^2.
\end{equation}
Moreover, rewriting the augmented Stokes momentum equation in
\cref{eq:sosm_previous_b} in terms of the mass fluxes,
and eliminating the stress via \cref{eq:newtonian_const}, we obtain
\begin{equation*}
	-\div \big( \mathscr{A}^{-1} \epsilon(v) \big) + \nabla p + \gamma v - \gamma 
	\Psi 
	\textstyle\sum_{j=1}^n J_j
	= \rho f
	\quad \textrm{in} \ \Omega.
\end{equation*}

\subsection{Full SOSM problem statement} \label{sec:full_problem}

In the following we use bar notation for $n$-tuples; for example
we write $\bar{J} = (J_1, \ldots, J_n)$ and $\bar{\mu} = (\mu_1, \ldots, \mu_n)$.
In this work we consider the following formulation of the SOSM problem.
Given data $f, \bar{r}$, find
a barycentric velocity $v$,
pressure $p$,
mass fluxes $\bar{J}$ and
chemical potentials $\bar{\mu}$
such that
{\allowdisplaybreaks
\begin{subequations} \label{eq:sosm_strong2}
\begin{alignat}{3}
	-M_i^{-1} \nabla \mu_i + \Psi \nabla p + \gamma \Psi v
	&= \textstyle\sum_{j=1}^n \widetilde{\bm{M}}^{\gamma}_{ij} J_j
	&&\quad \ \textrm{in} \ \Omega
	&&\quad \forall i,
	\label{eq:sosm_strong3_a} \\
	-\div \big( \mathscr{A}^{-1} \epsilon(v) \big)
	+ \nabla p + \gamma v - \gamma \Psi 
	\textstyle\sum_{j=1}^n J_j
	&= \rho f
	&&\quad \ \textrm{in} \ \Omega,
	\label{eq:sosm_strong3_b}  \\
	M_i^{-1} \div J_i
	&= r_i
	&&\quad \ \textrm{in} \ \Omega
	&&\quad \forall i,
	\label{eq:sosm_strong3_c} \\
	\div v 
	&= \div \big( \Psi \textstyle\sum_{j=1}^n J_j \big)
	&&\quad \ \textrm{in} \ \Omega,
	\label{eq:sosm_strong3_d}
\end{alignat}
\end{subequations}
}%
subject to the constitutive law in \cref{eq:thermo_relation},
and Dirichlet boundary conditions%
\footnote{
Boundary conditions on the total mass flux $\rho v$
can also be considered (see \cite{aznaran2024finite} and 
\cref{sec:benzene_cyclohexane}).
}%
\begin{equation} \label{eq:sosm_strong2_bcs}
	v = v_D, \quad J_i \cdot n = J_{D,i}
	\quad \textrm{on} \ \partial \Omega
	\quad \forall i \in \{1, \ldots, n\}.
\end{equation}
The Dirichlet data must satisfy compatibility conditions
$M_i^{-1} \int_{\partial \Omega} J_{D,i} \cdot n \mathop{\mathrm{d}s}
= \int_{\Omega} r_i \mathop{\mathrm{d}x}$.

Recall that $\rho$ is the density,
$\Psi = 1 / \rho$ the density reciprocal,
$M_1, \ldots, M_n$ the molar masses,
$\gamma > 0$ the augmentation parameter,
$\mathscr{A}$ the compliance tensor defined through
\cref{eq:newtonian_const} and
$\widetilde{\bm{M}}^{\gamma}_{ij}$
is a modified Onsager transport matrix defined through
\cref{eq:aug_transport_tilde,eq:transport_m_def}.
As in \cite{aznaran2024finite}, 
we make the following regularity assumptions on the data.
\begin{assumption}[Data regularity] \label{asm:data_regularity}
	We assume the data regularity $f, r_1, \ldots, r_n \in L^2(\Omega)$,
	$v_D \in H^{1/2}(\partial \Omega)^d$ and
	$J_{D,i} \in H^{-1/2}(\partial \Omega)$ for all $i \in \{1, \ldots, n\}$.
\end{assumption}
\subsection{Picard linearization}

We first study a Picard linearization of the SOSM problem
which follows a Gau\ss--Seidel approach in which the
concentrations $\bar{c}$ are frozen and treated as fixed fields.
In practice this can be done at every step of a fixed point iteration,
in which one uses the concentrations from the previous step
(computed via the constitutive law in \cref{eq:thermo_relation})
to linearize the problem at the current step.
Fixing the concentrations also allows for
$\rho, \Psi$ and $\widetilde{\bm{M}}^{\gamma}_{ij}$ to be fixed, since they
can be expressed in terms of the concentrations through simple algebraic formulae.
Consequently the SOSM equations in \cref{eq:sosm_strong2} become linear
in the unknown fields $v, p, \bar{J}, \bar{\mu}$.
It is these $2+2n$ fields that we treat as unknowns in our formulation of the
Picard linearization.

We cast the Picard linearization of \cref{eq:sosm_strong2,eq:sosm_strong2_bcs}
as a variational problem.
For the integrals in the variational formulation to be well-defined, as in \cite{aznaran2024finite} we require 
the following physically reasonable assumptions on the density and concentration 
fields.
\begin{assumption}[Concentration and density regularity] 
\label{asm:conc_regularity}
	We assume that
	$\rho \in W^{1,\infty}(\Omega)$,
	and that
	$c_i \in L^{\infty}(\Omega)$ and
	$c_i \geq c_{\textrm{min}}$ a.e.~in $\Omega$ for all $i \in \{1, \ldots, n\}$ where
	$c_{\textrm{min}} > 0$ is a constant.
	Since $\rho = \sum_{j=1}^n M_j c_j$ it also follows that
	$\Psi = 1 / \rho \in W^{1,\infty}(\Omega)$.
\end{assumption}

Let $(\cdot, \cdot)_{\Omega}$ denote the $L^2$-inner-product on $\Omega$
with corresponding norm $\norm{\cdot}_{L^2(\Omega)}$.
We use standard notation (see e.g.~\cite{ern2021finiteI}) for the Sobolev spaces
$H^1(\Omega), H(\div; \Omega)$ and norms
$\norm{\cdot}_{H^1(\Omega)}, \norm{\cdot}_{H(\div; \Omega)}$.
Their counterparts with vanishing traces are denoted by
\begin{equation*}
	H_0^1(\Omega)
	= \{ u \in H^1(\Omega) : u|_{\partial \Omega} = 0 \},
	\quad
	H_0(\div; \Omega)
	= \{ K \in H(\div; \Omega) : K \cdot n|_{\partial \Omega} = 0 \}.
\end{equation*}
We also consider the space
$L_0^2(\Omega) = \{ q \in L^2(\Omega) : (q, 1)_{\Omega} = 0\}$.

Proceeding formally, we multiply \cref{eq:sosm_strong3_a} by a
test function $K_i \in H_0(\div; \Omega)$,
integrate over $\Omega$, and integrate
the terms with gradients by parts, yielding
\begin{equation} \label{eq:sosm_va_a}
	( M_i^{-1} \mu_i, \div K_i)_{\Omega}
	- (p, \div (\Psi K_i))_{\Omega}
	+ (\gamma \Psi v, K_i)_{\Omega}
	= \textstyle\sum_{j=1}^n \big( \widetilde{\bm{M}}^{\gamma}_{ij} J_j, K_i 
	\big)_{\Omega}.
\end{equation}
Next, we multiply \cref{eq:sosm_strong3_b} by a test function 
$u \in H_0^1(\Omega)^d$
and integrate the viscous and pressure terms by parts. This yields
\begin{equation} \label{eq:sosm_va_b}
	(\mathscr{A}^{-1} \epsilon(v), \nabla u)_{\Omega} - (p, \div u)_{\Omega}
	+ \big(\gamma v - \gamma \Psi \textstyle\sum_{j=1}^n J_j, u \big)_{\Omega}
	= (\rho f, u)_{\Omega}.
\end{equation}
Moreover, since $\mathscr{A}$ is defined by
\cref{eq:newtonian_const},
the viscous terms simplify to
\begin{equation} \label{eq:sosm_va_c}
	(\mathscr{A}^{-1} \epsilon(v), \nabla u)_{\Omega} = 2 \eta (\epsilon(v), 
	\epsilon(u))_{\Omega} + \lambda (\div v, \div u)_{\Omega},
\end{equation}
where $\lambda := \zeta - 2\eta / d$ is a Lamé coefficient.
Finally, the variational analogues of
\cref{eq:sosm_strong3_c,eq:sosm_strong3_d} are simply:
for all test functions $w_1, \ldots, w_n, q \in L_0^2(\Omega)$,
\begin{align}
	\textstyle\sum_{i=1}^n (M_i^{-1} \div J_i, w_i)_{\Omega} &= 
	\textstyle\sum_{i=1}^n (r_i, w_i)_{\Omega}, \label{eq:sosm_va_d} \\
	(\div v, q)_{\Omega} &= 
	(\div\big(\Psi \textstyle\sum_{j=1}^n J_j \big), q)_{\Omega}.
	\label{eq:sosm_va_e}
\end{align}

\cref{asm:data_regularity,asm:conc_regularity} imply that the integrals in
\cref{eq:sosm_va_a,eq:sosm_va_b,eq:sosm_va_c,eq:sosm_va_d,eq:sosm_va_e}
are well-defined if $(v, p) \in H^1(\Omega)^d \times L_0^2(\Omega)$
and $(J_i, \mu_i) \in H(\div; \Omega) \times L_0^2(\Omega)$ for all $i$.
These are the spaces we will employ in our variational formulation.
Note that we seek the pressure and chemical potentials in $L_0^2(\Omega)$
as opposed to $L^2(\Omega)$ to address the fact that they are only determined up
to additive constants.
We also introduce the product spaces
\begin{equation} \label{eq:sosm_va_spaces}
	V = H^1(\Omega)^d \times H(\div; \Omega)^n, \
	V_0 = H_0^1(\Omega)^d \times H_0(\div; \Omega)^n, \
	Q = L_0^2(\Omega) \times L_0^2(\Omega)^n.
\end{equation}
Note that $V_0 \subset V$ consists of the traceless functions in $V$.
Consider the bilinear forms%
\begin{subequations}
	\begin{align}
		&\begin{cases} \label{eq:a_v_cts_defn}
			\begin{split}
				&a_v : H^1(\Omega)^d \times H_0^1(\Omega)^d \to \R, \\
				&a_v(v, u) :=
				2 \eta (\epsilon(v), \epsilon(u))_{\Omega}
				+ \lambda (\div v, \div u)_{\Omega},
			\end{split}
		\end{cases} \\
		&\begin{cases} \label{eq:a_o_cts_defn}
			\begin{split}
				&a_o : V \times V_0 \to \R, \\
				&a_o((v, \bar{J}), (u, \bar{K})) :=
				\begin{aligned}[t]
					&\gamma \big( v - \Psi \textstyle\sum_{j=1}^n J_j,
					u - \Psi \textstyle\sum_{i=1}^n K_i \big)_{\Omega} \\
					&+ \textstyle\sum_{i,j=1}^n
					\big( \widetilde{\bm{M}}_{ij} J_j, K_i \big)_{\Omega},
				\end{aligned} \\
				&\textrm{ where }
				\widetilde{\bm{M}}_{ij} := \bm{M}_{ij} / (M_i M_j c_i c_j)
				\textrm{ and $\bm{M}_{ij}$ is defined in 
				\cref{eq:transport_m_def}},
			\end{split}
		\end{cases} \\
		&\begin{cases} \label{eq:a_cts_defn}
			\begin{split}
				&a : V \times V_0 \to \R, \\
				&a((v, \bar{J}), (u, \bar{K})) := a_v(v, u)
				+ a_o((v, \bar{J}), (u, \bar{K})),		
			\end{split}
		\end{cases} \\
		&\begin{cases} \label{eq:b_cts_defn}
			\begin{split}
				&b : V \times Q \to \R, \\
				&b((u, \bar{K}), (p, \bar{\mu})) :=
				\begin{aligned}[t]
					&-(p, \div u)_{\Omega}
					+ \textstyle{\sum_{i=1}^n} (p, \div(\Psi K_i))_{\Omega} \\
					&- \textstyle{\sum_{i=1}^n}
					(M_i^{-1} \mu_i, \div K_i)_{\Omega}.
				\end{aligned}
			\end{split}
		\end{cases}
	\end{align}
\end{subequations}

The conditions in
\cref{eq:sosm_va_a,eq:sosm_va_b,eq:sosm_va_c,eq:sosm_va_d,eq:sosm_va_e} are
equivalent to the following problem:
find $((v, \bar{J}), (p, \bar{\mu})) \in V \times Q$
such that $(v, \bar{J})$ satisfy the boundary conditions in
\cref{eq:sosm_strong2_bcs} and
\begin{subequations} \label{eq:sosm_weak}
	\begin{alignat}{2}
		a((v, \bar{J}), (u, \bar{K})) + b((u, \bar{K}), (p, \bar{\mu}))
		&= (\rho f, u)_{\Omega}
		\quad &&\forall (u, \bar{K}) \in V_0, \\
		b((v, \bar{J}), (q, \bar{w})) &= 
		-\textstyle\sum_{i=1}^n (r_i, w_i)_{\Omega} 
		\quad &&\forall (q, \bar{w}) \in Q.
	\end{alignat}
\end{subequations}
This constitutes our variational formulation of the Picard linearized SOSM problem.
Note that $a$ is symmetric on $V_0 \times V_0$
and therefore \cref{eq:sosm_weak} is a symmetric problem.

\subsection{Linearized well-posedness} \label{sec:linearized_wp}

We now establish well-posedness of the problem
in \cref{eq:sosm_weak}.
Without loss of generality we may consider the case of homogeneous
Dirichlet boundary data, so that $(v, \bar{J})$ is sought for in $V_0$.
By linearity the inhomogeneous case can
be recovered by decomposing the solution as
$(v, \bar{J}) = (v_0, \bar{J}_0) + (v_l, \bar{J}_l)$
where $(v_0, \bar{J}_0) \in V_0$ are functions to be sought for and
$(v_l, \bar{J}_l) \in V$ are fixed liftings which satisfy
the inhomogeneous Dirichlet boundary conditions in \cref{eq:sosm_strong2_bcs}.

We equip the product spaces $V$ and $Q$ with their Hilbertian product norms
\begin{align*}
	\norm{(u, \bar{K})}_V^2 &= \norm{u}_{H^1(\Omega)^d}^2
	+ \textstyle\sum_{j=1}^n \norm{K_j}_{H(\div; \Omega)}^2, \\
	\norm{(q, \bar{w})}_Q^2 &= \norm{q}_{L^2(\Omega)}^2
	+ \textstyle\sum_{j=1}^n \norm{w_j}_{L^2(\Omega)}^2.
\end{align*}
One can verify that $a$ and $b$ are bounded in these norms, with constants of
boundedness dependent on the concentrations and density.
The linear forms on the right-hand side of \cref{eq:sosm_weak} are also bounded in 
these norms.
Well-posedness of the problem in \cref{eq:sosm_weak}
follows from the following two standard conditions on $a$ and $b$ (see 
e.g.~\cite[Thm.~4.2.1]{boffi2013mixed}).
\begin{enumerate}[(i)]
	\item An inf-sup condition on $b$:
	For some constant $\beta > 0$, there holds
	\begin{equation} \label{eq:b_infsup_cts}
		\sup_{(u, \bar{K}) \in V_0} 
		\frac{b((u, \bar{K}), (q, \bar{w}))}
		{\norm{(u, \bar{K})}_V } \geq \beta
		\norm{(q, \bar{w})}_Q
		\quad \forall (q, \bar{w}) \in Q.
	\end{equation}
	\item Coercivity of $a$ on $\ker b$:
	For some constant $\alpha > 0$, there holds
	\begin{equation} \label{eq:a_coercive_cts}
		a((u, \bar{K}), (u, \bar{K})) \geq \alpha \norm{(u, \bar{K})}_V^2
		\quad \forall (u, \bar{K}) \in W,
	\end{equation}
	where $W \subset V_0$ is the kernel of $b$, i.e.
	\begin{equation} \label{eq:ker_b_defn_cts}
		W := 	
		\Big\{ (u, \bar{K}) \in V_0 : 
		b((u, \bar{K}), (q, \bar{w})) = 0 \ \forall (q, \bar{w}) \in Q
		\Big\}.
	\end{equation}
\end{enumerate}
\begin{proof}[Proof of the inf-sup condition in \cref{eq:b_infsup_cts}]
Let $Q^*$ denote the dual space of $Q$.
Let $B : V_0 \to Q^*$ be the linear operator corresponding to $b$, i.e.
\begin{equation} \label{eq:B_cts_defn}
	[B (u, \bar{K})] (q, \bar{w})
	= b((u, \bar{K}), (q, \bar{w}))
	\quad \forall ((u, \bar{K}), (q, \bar{w})) \in V_0 \times Q.
\end{equation}
The inf-sup condition in \cref{eq:b_infsup_cts} is
equivalent to the statement that $B$ is a surjection 
(see e.g.~\cite[sect.~4.2.2]{boffi2013mixed}).
To verify surjectivity of $B$, let $l \in Q^*$ be given.
Since $Q= L_0^2(\Omega) \times L_0^2(\Omega)^n$, by the Riesz representation 
theorem there exists functions $l_0, l_1, \ldots, l_n \in L_0^2(\Omega)$ such that
$l (q, \bar{w}) = (l_0, q)_{\Omega}+ \textstyle\sum_{i=1}^n (l_i, w_i)_{\Omega}$
for all $(q, \bar{w}) \in Q$.
Since $\div : H_0(\div; \Omega) \to L_0^2(\Omega)$ is surjective
\cite{girault1986finite},
we can choose $K_i \in H_0(\div; \Omega)$ with $-M_i^{-1} \div K_i = l_i$
for each $i \in \{1, \ldots, n\}$.
Likewise, since $\div : H_0^1(\Omega)^d \to L_0^2(\Omega)$ is surjective
\cite{girault1986finite},
we can choose $u \in H_0^1(\Omega)^d$ with
$\div u = -l_0 + \sum_{i=1}^n \div(\Psi K_i)$.
The definitions of $B$ and $b$ (see \cref{eq:B_cts_defn,eq:b_cts_defn})
reveal that $B(u, \bar{K}) = l$.
\end{proof}

\begin{proof}[Proof of the coercivity condition in \cref{eq:a_coercive_cts}]
	Using the definitions of $W$ and $b$
	(see \cref{eq:ker_b_defn_cts,eq:b_cts_defn})
	one can check that if $(u, \bar{K}) \in W$
	then $\div K_i = 0$ for all $i$.
	The inequality in \cref{eq:a_coercive_cts} will therefore follow
	if we can prove that
	\begin{equation} \label{eq:a_coerciv_conc4}
		a((u, \bar{K}), (u, \bar{K})) \geq \alpha 
		\Big[\norm{u}_{H^1(\Omega)^d}^2
		+ \textstyle\sum_{i=1}^n \norm{K_i}_{L^2(\Omega)^d}^2 \Big]
		\quad \forall (u, \bar{K}) \in V_0.
	\end{equation}
	We emphasize that \cref{eq:a_coerciv_conc4} holds for all
	$(u, \bar{K}) \in V_0$ and not just $(u, \bar{K}) \in W$.
	This fact will be important later on for establishing discrete well-posedness.
	
	We now establish \cref{eq:a_coerciv_conc4}.
	Recalling the definition of $a$ in \cref{eq:a_cts_defn}, we have
	\begin{equation*}
		a((u, \bar{K}), (u, \bar{K}))
		= a_v(u, u) 
		+ a_o((u, \bar{K}), (u, \bar{K})).
	\end{equation*}
	A lower bound for $a_v(u, u)$ (recall \cref{eq:a_v_cts_defn}) can be 
	obtained with standard arguments involving a Korn inequality
	(see e.g.~\cite[Chapter 42]{ern2021finiteII}),
	which yield
	\begin{equation} \label{eq:a_v_bound}
		a_v(u, u) \geq C_1 \delta
		\norm{u}_{H^1(\Omega)^d}^2 \quad \forall u \in H_0^1(\Omega)^d,
	\end{equation}
	for some constant $C_1 > 0$ and $\delta = \min\{ \eta, \zeta \}$.
	To bound $a_o((u, \bar{K}), (u, \bar{K}))$, let $(u, \bar{K}) \in V_0$ and set
	$u_i = K_i / (M_i c_i) \in L^2(\Omega)$.
	The definition of $a_o$ (recall \cref{eq:a_o_cts_defn}) reveals that
	\begin{align} \label{eq:a_osm_bound1}
		\begin{split}
			a_o((u, \bar{K}), (u, \bar{K}))
			=& \textstyle\sum_{i,j=1}^n 
			\big( \widetilde{\bm{M}}_{ij} K_j, K_i \big)_{\Omega} \\
			&+ \gamma
			\big(u - \Psi \textstyle\sum_{j=1}^n K_j, 
			u - \Psi \textstyle\sum_{i=1}^n K_i \big)_{\Omega} \\
			=& \textstyle\sum_{i,j=1}^n
			\big( \bm{M}_{ij} u_j, u_i \big)_{\Omega} \\
			&+ \textstyle\sum_{i,j=1}^n
			\gamma \big( \omega_j(u - u_j), \omega_i (u - u_i) \big)_{\Omega}.
		\end{split}
	\end{align}
	Also, since $\bm{M}_{ij}$ is symmetric with
	$\sum_{j=1}^n \bm{M}_{ij} = 0$ for all $i$, we have
	\begin{equation} \label{eq:a_osm_bound2}
		\textstyle\sum_{i,j=1}^n \big( \bm{M}_{ij} u_j, u_i \big)_{\Omega}
		= \textstyle\sum_{i,j=1}^n 
		\big( \bm{M}_{ij} (u - u_j), u - u_i \big)_{\Omega}.
	\end{equation}
	But
	$\bm{M}_{ij}^{\gamma} = \bm{M}_{ij} + \gamma \omega_i \omega_j$
	(recall \cref{eq:aug_transport_mat}) and hence
	\crefrange{eq:a_osm_bound1}{eq:a_osm_bound2} yield
	\begin{equation} \label{eq:a_osm_bound3}
		a_o((u, \bar{K}), (u, \bar{K})) = 
		\textstyle\sum_{i,j=1}^n \big( \bm{M}^{\gamma}_{ij} (u - u_j), u - u_i
		\big)_{\Omega}.
	\end{equation}
	As argued in the proof of \cite[Lem.~3.3]{aznaran2024finite}, 
	since $\gamma > 0$ the matrix
	$\bm{M}^{\gamma}_{ij}$ is uniformly positive-definite on $\Omega$.
	Hence, by \cref{eq:a_osm_bound3},
	for some constant $C_2 > 0$
	there holds
	\begin{equation} \label{eq:a_osm_bound4}
		a_o((u, \bar{K}), (u, \bar{K}))
		\geq C_2
		\textstyle\sum_{i=1}^n \norm{u - u_i}_{L^2(\Omega)^d}^2.
	\end{equation}
	Note that $C_2$ depends on the concentrations, as discussed in
	\cite[Rem.~4.4]{van2022augmented}.
	
	Finally, let
	$C_3 = 1 / \max \big\{ \norm{M_j c_j}_{L^{\infty}(\Omega)} \big\}_{j=1}^n > 0$.
	Since $u_i = K_i / (M_i c_i)$ we then have
	$\norm{u_i}_{L^2(\Omega)^d} \geq
	C_3 \norm{K_i}_{L^2(\Omega)^d}$ for all $i$.
	Combining \cref{eq:a_v_bound,eq:a_osm_bound4} we find that
	\begin{align*}
		a((u, \bar{K}), (u, \bar{K})) \geq&
		\min \{C_1 \delta / 2, C_2 \}
		\Big[\norm{u}_{H^1(\Omega)^d}^2 + \norm{u}_{L^2(\Omega)^d}^2
		+ \textstyle\sum_{i=1}^n \norm{u - u_i}_{L^2(\Omega)^d}^2
		\Big] \\
		\geq&
		\min \{C_1 \delta / 2, C_2 \}
		\Big[\norm{u}_{H^1(\Omega)^d}^2
		+ (n+1)^{-1} \textstyle\sum_{i=1}^n \norm{u_i}_{L^2(\Omega)^d}^2
		\Big] \\
		\geq&
		\min \{C_1 \delta / 2, C_2 \} \cdot \min \{1, C_3 / (n+1) \} \\
		&\cdot \Big[\norm{u}_{H^1(\Omega)^d}^2
		+ \textstyle\sum_{i=1}^n \norm{K_i}_{L^2(\Omega)^d}^2
		\Big],
	\end{align*}
	where we used the reverse triangle inequality.
	This establishes the bound in \cref{eq:a_coerciv_conc4}.
\end{proof}

\section{Discretization of the Picard linearization} \label{sec:discretization}

We now derive quasi-optimal finite element methods for 
solving the Picard linearized SOSM problem \cref{eq:sosm_weak}.
Let $V^v_h \subset H^1(\Omega)^d$ denote the discrete barycentric
velocity space, $V^J_h \subset H(\div; \Omega)$
the discrete species mass flux space, $P_h \subset L_0^2(\Omega)$
the discrete pressure space and $U_h \subset L_0^2(\Omega)$
the discrete species chemical potential space.
Here $h \in (0, \infty)$ is a parameter generating these spaces
(e.g.~the maximum cell diameter in a mesh of $\Omega$).
We also set
\begin{equation} \label{eq:discrete_spaces_lin}
	V_h := V^v_h \times (V^J_h)^n \subset V
	\quad \textrm{and} \quad
	Q_h := P_h \times (U_h)^n \subset Q.
\end{equation}
The subspaces with vanishing traces are denoted by
$V^v_{0h} := V^v_h \cap H_0^1(\Omega)^d$,
$V^J_{0h} := V^J_h \cap H_0(\div; \Omega)$
and $V_{0h} := V^v_{0h} \times (V^J_{0h})^n \subset V_0$.
For simplicity we take $V^J_h$ and $U_h$ to be independent
of the species index $i \in \{1, \ldots, n\}$, i.e.~for each species we
use the same mass flux and chemical potential space. 
However, the results here can be extended to the case where
different mass flux and chemical potential spaces are used for different species.

A conforming discretization of \cref{eq:sosm_weak} is obtained 
by employing Galerkin's method with $V_{0h} \times Q_h$ as the 
discrete subspace of $V_0 \times Q$.
We assume homogeneous Dirichlet boundary conditions in
\cref{eq:sosm_strong2_bcs}; the inhomogeneous case is addressed
using discrete lifting functions.
The discrete analogue of \cref{eq:sosm_weak} is thus:
find $((v_h, \bar{J}_h), (p_h, \bar{\mu}_h)) \in V_{0h} \times Q_h$ such that
for all $((u_h, \bar{K}_h), (q_h, \bar{w}_h)) \in V_{0h} \times Q_h$ there holds
\begin{subequations} \label{eq:discrete_problem}
	\begin{align}
		a((v_h, \bar{J}_h), (u_h, \bar{K}_h)) 
		+ b((u_h, \bar{K}_h), (p_h, \bar{\mu}_h))
		&= (\rho f, u_h)_{\Omega}, \\
		b((v_h, \bar{J}_h), (q_h, \bar{w}_h)) 
		&= -\textstyle\sum_{i=1}^n (w_{h,i}, r_i)_{\Omega}.
	\end{align}
\end{subequations}
Since \cref{eq:sosm_weak} is a saddle-point problem, well-posedness at the
continuous level is not automatically inherited at the discrete level.
The following theorem
(see e.g.~\cite[Chap.~5]{boffi2013mixed}) outlines conditions that ensure 
discrete well-posedness and quasi-optimality.
\begin{theorem}[Discrete well-posedness] \label{thm:discrete_wp}
	Assume that there exists constants $\tilde{\beta}, \tilde{\alpha} > 0$
	independent of $h$ such that we have (i) the discrete inf-sup condition
	\begin{equation} \label{eq:b_infsup_discrete}
		\sup_{(u_h, \bar{K}_h) \in V_{0h}} 
		\frac{b((u_h, \bar{K}_h), (q_h, \bar{w}_h))}
		{\norm{(u_h, \bar{K}_h)}_V} \geq 
		\tilde{\beta} \norm{(q_h, \bar{w}_h)}_Q
		\quad \forall (q_h, \bar{w}_h) \in Q_h,
	\end{equation}
	and (ii) on the discrete kernel of $b$
	\begin{equation} \label{eq:discrete_kernel}
	W_h := \big\{ 
	(u_h, \bar{K}_h) \in V_{0h} : 
	b((u_h, \bar{K}_h), (q_h, \bar{w}_h)) = 0
	\ \forall (q_h, \bar{w}_h) \in Q_h
	\big\},
	\end{equation}
	there holds the coercivity condition
	\begin{equation} \label{eq:b_coercivity_discrete}
		a((u_h, \bar{K}_h), (u_h, \bar{K}_h)) 
		\geq \tilde{\alpha} \norm{(u_h, \bar{K}_h)}_V^2
		\quad \forall (u_h, \bar{K}_h) \in W_h.
	\end{equation}
	Then the discrete problem in \cref{eq:discrete_problem} is well-posed.
	Also, let $((v, \bar{J}), (p, \bar{\mu})) \in V_0 \times Q$ denote the
	solution of \cref{eq:sosm_weak} and 
	$((v_h, \bar{J}_h), (p_h, \bar{\mu}_h)) \in V_{0h} \times Q_h$
	the solution of \cref{eq:discrete_problem}.
	Then we have the quasi-optimal a priori error estimate
	\begin{equation} \label{eq:discrete_quasioptimal}
		\norm{(v - v_h, \bar{J} - \bar{J}_h)}_V
		+ \norm{(p - p_h, \bar{\mu} - \bar{\mu}_h)}_Q
		\leq C \cdot \mathcal{E}_h,
	\end{equation}
	with $C > 0$ independent of $h$, and 
	$\mathcal{E}_h$ the best approximation error, i.e.
	\begin{equation} \label{eq:best_approx_error}
		\mathcal{E}_h := 
		\inf_{(u_h, \bar{K}_h) \in V_{0h}}
		\norm{(v - u_h, \bar{J} - \bar{K}_h)}_V
		+ \inf_{(q_h, \bar{w}_h) \in Q_h}
		\norm{(p - q_h, \bar{\mu} - \bar{W}_h)}_Q.
	\end{equation}
\end{theorem}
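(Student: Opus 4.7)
The theorem is a direct application of the abstract Brezzi theory for discretized saddle-point problems: hypotheses (i) and (ii) are precisely the two discrete Brezzi conditions on $V_{0h} \subset V_0$ and $Q_h \subset Q$, and continuity of $a$ and $b$ on these spaces is inherited automatically from the continuous level (noted in the text just above \cref{eq:b_infsup_cts}). The proof splits into two standard steps: first establish well-posedness of \cref{eq:discrete_problem}, then derive the quasi-optimal estimate.

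For discrete well-posedness, finite-dimensionality of $V_{0h} \times Q_h$ reduces existence to uniqueness. Setting the right-hand sides in \cref{eq:discrete_problem} to zero and testing the first equation with $(v_h, \bar{J}_h)$, the second equation shows that $(v_h, \bar{J}_h) \in W_h$, so the coercivity condition (ii) forces $(v_h, \bar{J}_h) = 0$. The first equation then reduces to $b((u_h, \bar{K}_h), (p_h, \bar{\mu}_h)) = 0$ for all $(u_h, \bar{K}_h) \in V_{0h}$, and the discrete inf-sup condition (i) then gives $(p_h, \bar{\mu}_h) = 0$.

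For quasi-optimality I would follow the standard mixed-method argument. Subtracting \cref{eq:discrete_problem} from \cref{eq:sosm_weak} evaluated on discrete test functions yields Galerkin orthogonality
\begin{align*}
	a((v - v_h, \bar{J} - \bar{J}_h), (u_h, \bar{K}_h))
	+ b((u_h, \bar{K}_h), (p - p_h, \bar{\mu} - \bar{\mu}_h)) &= 0, \\
	b((v - v_h, \bar{J} - \bar{J}_h), (q_h, \bar{w}_h)) &= 0,
\end{align*}
for all $(u_h, \bar{K}_h) \in V_{0h}$ and $(q_h, \bar{w}_h) \in Q_h$. The pivotal step is to use the discrete inf-sup (i) to surject $b(\cdot, \cdot) : V_{0h} \to Q_h^*$ with a bounded right inverse of norm at most $\tilde{\beta}^{-1} \norm{b}$; applying this right inverse to an arbitrary $(u_h, \bar{K}_h) \in V_{0h}$ produces a corrected approximation $(\tilde{u}_h, \tilde{\bar{K}}_h) \in V_{0h}$ with $(\tilde{u}_h - v_h, \tilde{\bar{K}}_h - \bar{J}_h) \in W_h$ and $\norm{(u_h - \tilde{u}_h, \bar{K}_h - \tilde{\bar{K}}_h)}_V$ controlled by $\norm{(v - u_h, \bar{J} - \bar{K}_h)}_V$. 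Applying (ii) to this element of $W_h$, substituting via the first Galerkin orthogonality equation, using boundedness of $a$ and $b$, and then taking infima over $(u_h, \bar{K}_h) \in V_{0h}$ and $(q_h, \bar{w}_h) \in Q_h$ bounds $\norm{(v - v_h, \bar{J} - \bar{J}_h)}_V$ by $C\, \mathcal{E}_h$. A second application of the discrete inf-sup to the first Galerkin orthogonality equation then bounds $\norm{(p - p_h, \bar{\mu} - \bar{\mu}_h)}_Q$ by the velocity/flux error plus the best approximation in $Q_h$, which together yield \cref{eq:discrete_quasioptimal}.

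The only genuine conceptual step is the inf-sup lifting, which decouples the primal and dual errors; all constants in the final estimate are $h$-independent by the assumed uniformity of $\tilde{\alpha}$ and $\tilde{\beta}$, and the remainder is bookkeeping of the continuity constants of $a$ and $b$. The substantive work lies not in this theorem but in verifying hypotheses (i)--(ii) for concrete choices of $V^v_h$, $V^J_h$, $P_h$ and $U_h$, which is presumably the content of subsequent subsections.
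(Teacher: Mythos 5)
Your proposal is correct and is exactly the standard Brezzi argument (uniqueness via kernel coercivity plus inf-sup, then quasi-optimality via Galerkin orthogonality and the inf-sup lifting into the discrete kernel) that the paper itself does not reprove but simply cites from \cite[Chap.~5]{boffi2013mixed}. Nothing in your sketch conflicts with the paper's setting, since $V_{0h}\times Q_h\subset V_0\times Q$ is conforming and the continuity constants of $a$ and $b$ are $h$-independent.
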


The next two lemmas give conditions
which ensure that \cref{eq:b_infsup_discrete,eq:b_coercivity_discrete}
hold.

\begin{lemma}[Inf-sup stability] \label{lemma:discrete_infsup}
	Condition \cref{eq:b_infsup_discrete} will
	hold provided that,
	for some constants $\beta_1, \beta_2 > 0$ independent of $h$,
	$(V^v_h, P_h)$ and $(V^J_h, U_h)$ satisfy inf-sup conditions%
	\begin{subequations}
	\begin{align}
		\sup_{u_h \in V^v_{0h}}
		\frac{(\div u_h, q_h)_{\Omega}}
		{\norm{u_h}_{H^1(\Omega)^d}} &\geq \beta_1 \norm{q_h}_{L^2(\Omega)}
		\quad \forall q_h \in P_h, \label{eq:discrete_infsup_h1} \\
		\sup_{K_h \in V^J_{0h}}
		\frac{(\div K_h, w_h)_{\Omega}}
		{\norm{K_h}_{H(\div; \Omega)}} &\geq 
		\beta_2 \norm{w_h}_{L^2(\Omega)}
		\quad \forall w_h \in U_h. \label{eq:discrete_infsup_hdiv}
	\end{align}
	\end{subequations}
\end{lemma}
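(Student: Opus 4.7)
The plan is to mirror the continuous inf-sup argument, replacing surjectivity of the divergence with its discrete analogues furnished by \cref{eq:discrete_infsup_h1,eq:discrete_infsup_hdiv}. Given $(q_h,\bar w_h) \in Q_h$, I would construct a test pair $(u_h,\bar K_h) \in V_{0h}$ for which $b((u_h,\bar K_h),(q_h,\bar w_h)) = \norm{(q_h,\bar w_h)}_Q^2$ and $\norm{(u_h,\bar K_h)}_V \leq C\,\norm{(q_h,\bar w_h)}_Q$ with $C$ independent of $h$; the inf-sup constant is then $\tilde\beta = 1/C$.

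The first step is to choose $\bar K_h$. The hypothesis \cref{eq:discrete_infsup_hdiv} is equivalent to $P_{U_h}\circ\div\colon V^J_{0h}\to U_h$ being surjective with a right inverse of operator norm at most $1/\beta_2$, where $P_{U_h}$ denotes the $L^2$-projection onto $U_h$. For each $i$ I would pick $K_{h,i}\in V^J_{0h}$ satisfying $P_{U_h}(\div K_{h,i}) = -M_i w_{h,i}$ with $\norm{K_{h,i}}_{H(\div;\Omega)} \leq \beta_2^{-1}M_i\norm{w_{h,i}}_{L^2(\Omega)}$. Since $w_{h,i}\in U_h$, we have $(w_{h,i},\div K_{h,i})_{\Omega} = (w_{h,i},P_{U_h}(\div K_{h,i}))_{\Omega}$, and so $-(M_i^{-1} w_{h,i},\div K_{h,i})_{\Omega} = \norm{w_{h,i}}_{L^2(\Omega)}^2$, the desired contribution from the third piece of $b$.

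The second step holds $\bar K_h$ fixed and uses \cref{eq:discrete_infsup_h1} in the same spirit to choose $u_h\in V^v_{0h}$ with $P_{P_h}(\div u_h) = P_{P_h}\bigl(\textstyle\sum_{i=1}^n \div(\Psi K_{h,i})\bigr) - q_h$. The target lies in $P_h\subset L_0^2(\Omega)$ because $K_{h,i}\cdot n = 0$ on $\partial\Omega$ forces $\int_{\Omega}\div(\Psi K_{h,i})\mathop{\mathrm{d}x} = 0$. Since $q_h\in P_h$ we may replace $\div u_h$ and $\textstyle\sum_i\div(\Psi K_{h,i})$ by their $P_h$-projections when pairing with $q_h$, so the first two terms of $b$ collapse to $\norm{q_h}_{L^2(\Omega)}^2$. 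A product-rule bound $\norm{\div(\Psi K_{h,i})}_{L^2(\Omega)} \leq \norm{\Psi}_{W^{1,\infty}(\Omega)}\norm{K_{h,i}}_{H(\div;\Omega)}$ (legitimate by \cref{asm:conc_regularity}) then closes the norm estimate for $u_h$ and, combined with the bound from the first step, for the whole test pair.

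The main subtlety is the second step. Because \cref{eq:discrete_infsup_h1,eq:discrete_infsup_hdiv} only invert the divergence modulo the $L^2$-projections onto $P_h$ and $U_h$, one cannot simply copy the continuous construction where $\div u = -l_0 + \textstyle\sum_i \div(\Psi K_i)$ is solved exactly. Arranging the cancellation between the $(q_h,\div u_h)_{\Omega}$ and $(q_h,\div(\Psi K_{h,i}))_{\Omega}$ terms after these projections, while keeping the target for $u_h$ inside $L_0^2(\Omega)$, is the crux of the argument; tracking norms through the coupling term $\div(\Psi K_{h,i})$ is the other minor technicality and is what introduces the density dependence into $\tilde\beta$.
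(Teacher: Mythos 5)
Your proposal is correct and follows essentially the same route as the paper, whose proof is simply stated to be the discrete analogue of the continuous surjectivity argument with $\beta_1,\beta_2$ supplying the $h$-independent bounds; you have filled in exactly those details, correctly handling the fact that the discrete inf-sup conditions only invert $\div$ modulo the $L^2$-projections onto $U_h$ and $P_h$. The construction (first $\bar K_h$ from \cref{eq:discrete_infsup_hdiv}, then $u_h$ from \cref{eq:discrete_infsup_h1} to cancel the $\div(\Psi K_{h,i})$ coupling terms) and the resulting norm bounds are sound.
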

\begin{proof}
The proof is analogous to that of the continuous
inf-sup condition \cref{eq:b_infsup_cts},
but with the need to use $\beta_1$ and $\beta_2$
to obtain an $h$-independent lower bound for $\tilde{\beta}$.
\end{proof}
\begin{lemma}[Discrete coercivity] \label{lemma:discrete_coercivity}
	Condition \cref{eq:b_coercivity_discrete}
	will hold provided there exists a constant $c > 0$ independent of $h$ such that
	\begin{equation} \label{eq:discrete_coercivity_inverse}
		\norm{K_h}_{L^2(\Omega)^d} \geq c
		\norm{K_h}_{H(\div; \Omega)}
	\end{equation}
	for all $K_h \in V^J_{0h}$ satisfying
	$(\div K_h, w_h)_{\Omega} = 0 \ \forall w_h \in U_h$.
	Note that the condition in \cref{eq:discrete_coercivity_inverse} holds
	with $c=1$ for divergence-free pairs, i.e.~pairs such that
	if $K_h \in V^J_{0h}$ satisfies 
	$(\div K_h, w_h)_{\Omega} = 0 \ \forall w_h \in U_h$
	then $\div K_h = 0$.
\end{lemma}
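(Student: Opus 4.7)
The plan is to exploit the fact, emphasized immediately after \cref{eq:a_coerciv_conc4}, that the continuous coercivity estimate was actually established on the entire space $V_0$ and not merely on the kernel $W$. Since $V_{0h} \subset V_0$, we can instantiate \cref{eq:a_coerciv_conc4} at any discrete pair $(u_h, \bar{K}_h) \in V_{0h}$ to obtain
\begin{equation*}
	a((u_h, \bar{K}_h), (u_h, \bar{K}_h))
	\geq \alpha \Big[\norm{u_h}_{H^1(\Omega)^d}^2
	+ \textstyle\sum_{i=1}^n \norm{K_{h,i}}_{L^2(\Omega)^d}^2 \Big],
\end{equation*}
with the continuous constant $\alpha$ (and no dependence on $h$). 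The remaining task is therefore to upgrade $\norm{K_{h,i}}_{L^2(\Omega)^d}$ to $\norm{K_{h,i}}_{H(\div;\Omega)}$ when $(u_h, \bar{K}_h) \in W_h$.

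To do this, first I would extract the relevant kernel condition on each $K_{h,i}$ from the definition of $W_h$. Testing the constraint $b((u_h, \bar{K}_h), (q_h, \bar{w}_h)) = 0$ with $q_h = 0$ and $\bar{w}_h$ having a single non-zero component $w_{h,i} \in U_h$, and inspecting the definition of $b$ in \cref{eq:b_cts_defn}, gives
\begin{equation*}
	(M_i^{-1} w_{h,i}, \div K_{h,i})_{\Omega} = 0
	\quad \forall w_{h,i} \in U_h, \quad \forall i \in \{1, \ldots, n\}.
\end{equation*}
Since $M_i$ is a positive constant, $K_{h,i}$ thus satisfies exactly the hypothesis of \cref{eq:discrete_coercivity_inverse}, which therefore yields $\norm{K_{h,i}}_{L^2(\Omega)^d}^2 \geq c^2 \norm{K_{h,i}}_{H(\div;\Omega)}^2$ for each $i$. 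Substituting into the previous display gives the desired coercivity in the full $V$-norm with an $h$-independent constant $\tilde{\alpha} := \alpha \min\{1, c^2\}$.

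For the final assertion about divergence-free pairs, the argument is immediate: if $(\div K_h, w_h)_{\Omega} = 0$ for all $w_h \in U_h$ forces $\div K_h = 0$ pointwise, then $\norm{K_h}_{H(\div;\Omega)}^2 = \norm{K_h}_{L^2(\Omega)^d}^2 + \norm{\div K_h}_{L^2(\Omega)}^2 = \norm{K_h}_{L^2(\Omega)^d}^2$, so \cref{eq:discrete_coercivity_inverse} holds trivially with $c = 1$. I do not expect any essential obstacle here; the only subtlety is recognizing that the author's earlier insistence on proving the continuous bound on all of $V_0$ rather than only on $W$ is precisely what makes the discrete argument go through without needing a Fortin-type projection or further compatibility assumptions between $V_h^J$ and $U_h$ beyond \cref{eq:discrete_coercivity_inverse}.
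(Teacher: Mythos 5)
Your proposal is correct and follows essentially the same route as the paper: it applies the bound in \cref{eq:a_coerciv_conc4} (valid on all of $V_0$, hence on $V_{0h}$), extracts $(\div K_{h,i}, w_h)_{\Omega} = 0$ for all $w_h \in U_h$ from the definition of $W_h$ and $b$, and then invokes \cref{eq:discrete_coercivity_inverse} to conclude with $\tilde{\alpha} = \alpha \min\{1, c^2\}$, exactly as in the paper's proof.
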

\begin{proof}
	Using the definition of $W_h$ and $b$
	(see \cref{eq:b_cts_defn,eq:discrete_kernel})
	one verifies that for all
	$(u_h, \bar{K}_h) \in W_h$,
	there holds
	$(\div K_{h,i}, w_h)_{\Omega} = 0 \ \forall w_h \in U_h \ \forall i$.
	The bounds in \cref{eq:a_coerciv_conc4} and
	\cref{eq:discrete_coercivity_inverse} then
	immediately yield that \cref{eq:b_coercivity_discrete} holds with
	$\tilde{\alpha} = \alpha \cdot \min \{1, c^2 \}$.
\end{proof}

Many standard finite element spaces satisfy the hypotheses of
\cref{lemma:discrete_infsup,lemma:discrete_coercivity}.
The barycentric velocity and pressure pair $(V^v_h, P_h)$ needs
to satisfy the inf-sup condition in \cref{eq:discrete_infsup_h1},
which can be accomplished by employing any conforming inf-sup stable
Stokes pair.
Examples include the Taylor--Hood \cite{taylor1973numerical} or
Scott--Vogelius \cite{scott1985norm} pairs.
The mass flux and chemical potential pair
$(V^J_h, U_h)$ must satisfy the conditions in
\cref{eq:discrete_coercivity_inverse,eq:discrete_infsup_hdiv},
which can be accomplished by employing, for example,
$\mathbb{BDM}_k$--$\mathbb{DG}_{k-1}$
\cite{brezzi1985two,nedelec1986new}
or $\mathbb{RT}_k$--$\mathbb{DG}_{k-1}$
\cite{raviart1977mixed} pairs
(these pairs are divergence-free).
Optimal, high-order spatial convergence rates can be achieved when
using the Taylor--Hood pair with degree $k \geq 2$~\cite{boffi1997three}
or Scott--Vogelius on suitable meshes (e.g.~$k \geq d$ on barycentrically-refined meshes~\cite{zhang2004new})
for $(V^v_h, P_h)$,
and $\mathbb{BDM}_k$--$\mathbb{DG}_{k-1}$ or
$\mathbb{RT}_k$--$\mathbb{DG}_{k-1}$
pairs for $(V^J_h, U_h)$.
For sufficiently smooth solutions,
the bound in \cref{eq:discrete_quasioptimal} then
predicts an optimal rate of
\begin{equation} \label{eq:optimal_rate}
	\norm{(v - v_h, \bar{J} - \bar{J}_h)}_V
	+ \norm{(p - p_h, \bar{\mu} - \bar{\mu}_h)}_Q
	= \mathcal{O}(h^k),
\end{equation}
where $h$ is the maximum cell diameter of
a shape-regular triangulation of $\Omega$.
We emphasize, however, that we have only proven these spatial rates
of convergence for the Picard linearization of the SOSM problem.

In single-component incompressible flow it can be advantageous to 
use divergence-free Stokes elements, as these yield velocity approximations that 
are independent of the pressure (i.e.~\textit{pressure-robust}) 
\cite{john2017divergence}.
For this reason Scott--Vogelius may be preferable to Taylor--Hood in the 
single-component incompressible setting, as the former is divergence-free 
whereas the latter is not.
However, in the multicomponent setting there does not seem to be
an analogue of the pressure-robustness phenomenon, since the mass-average 
constraint couples the barycentric velocity to the density and mass fluxes,
which in turn are coupled to the pressure through the OSM equations.
Hence, we do not expect a pressure-robust velocity approximation even with
divergence-free Stokes elements.
For this reason we prefer to use Taylor--Hood in our simulations,
as it has no apparent disadvantage relative to Scott--Vogelius but does not require special 
meshes.
It is less clear whether to use $\mathbb{BDM}_k$ or $\mathbb{RT}_k$ for the 
mass fluxes, since both lead to the same convergence rates in 
\cref{eq:optimal_rate}.
Note that the $\mathbb{BDM}_k$ finite element space includes all polynomials of 
degree $k$ whereas the $\mathbb{RT}_k$ space is a strict subspace of these
\cite{boffi2013mixed}.
Hence, for a given mesh and polynomial degree $k$, a $\mathbb{BDM}_k$ 
discretization may be sightly more accurate but also slightly more expensive
than that of $\mathbb{RT}_k$.
In practice the value of this (fairly minor) trade-off will be problem- and 
user-dependent.

\section{Nonlinear monolithic discretization and Newton's method} 
\label{sec:newton}

When combined with fixed point iteration, the Picard linearization
can be used to solve the nonlinear SOSM problem.
However, for challenging problems this iteration often does not converge to a 
root, or does so at a prohibitively slow rate.
A more robust approach is to use Newton's method \cite{deuflhard2011newton};
this requires a monolithic discretization of the nonlinear SOSM problem,
which we now outline.

In the monolithic setting, the Gau\ss--Seidel staggering employed in the Picard
linearization is no longer possible.
We still seek $((v_h, \bar{J}_h), (p_h, \bar{\mu}_h))$
in $V_h \times Q_h$ (recall \cref{eq:discrete_spaces_lin}),
but we must introduce additional unknowns to discretize the constitutive law 
in \cref{eq:thermo_relation}.
It is natural to seek the mole fractions in a
finite element space $X_h \subset L^2(\Omega)$
and enforce the $L^2$-projection of \cref{eq:general_const_law_1} into $X_h$.
Hence the discrete analogue of \cref{eq:general_const_law_1} is to
find $\bar{x}_h \in (X_h)^n$
such that for all $\bar{y}_h \in (X_h)^n$,
\begin{equation} \label{eq:general_const_law_discrete_1}
	(\mu_{h,i} - \mu_i^{\textrm{aux}}, y_{h,i})_{\Omega}
	= (G_i(T, p_h - p^{\textrm{aux}}, x_{h,1}, \ldots, x_{h,n}), y_{h,i})_{\Omega}
	\quad \forall i \in \{ 1, \ldots, n \}.
\end{equation}

As discussed in \cref{sec:thermo}, the constants
$\mu_1^{\textrm{aux}}, \ldots, \mu_n^{\textrm{aux}}, p^{\textrm{aux}} \in \R$
reflect the indeterminacy of the pressure and chemical potentials
up to additive constants.
To avoid solving for these constants we use the following trick.
Recall that in \cref{sec:discretization} we employed discrete chemical
potential and pressure spaces $P_h, U_h \subset L_0^2(\Omega)$.
Using $L_0^2(\Omega)$ instead of $L^2(\Omega)$ is convenient for the
analysis, but at an implementation level
it is easier to work with the analogues of these spaces
containing all constant functions,
\begin{equation} \label{eq:discrete_cp_p_spaces_constants}
	P_h' := P_h \oplus \spanmo\{1\}, \
	U_h' := U_h \oplus \spanmo\{1\}
	\textrm{ and }
	Q_h' := P_h' \times (U_h')^n.
\end{equation}
If we seek $(p_h, \bar{\mu}_h) \in Q_h'$
then there is no need for
$\mu_1^{\textrm{aux}}, \ldots, \mu_n^{\textrm{aux}}, p^{\textrm{aux}}$
to appear in \cref{eq:general_const_law_discrete_1}
as they can be absorbed into $(p_h, \bar{\mu}_h)$.
This is a simple trick, but using $Q_h'$ instead
of $Q_h$ requires the introduction of \textit{density consistency} terms,
as discussed further below.

Satisfaction of \cref{eq:general_const_law_discrete_1}
does not ensure that the discrete mole fractions exactly sum to unity,
i.e.~$\sum_{j=1}^n x_{h,j} = 1$ will only hold approximately.
One can instead solve for the $n-1$ discrete mole fractions
$x_{h,2}, \ldots, x_{h,n}$ and express $x_{h,1} = 1 - \sum_{j=2}^n x_{h,j}$.
However, this breaks permutational symmetry of the species
and it does not ensure that all $n$ equations in 
\cref{eq:general_const_law_1}
hold in an approximate sense 
(e.g.~as in \cref{eq:general_const_law_discrete_1}).
We therefore prefer to enforce \cref{eq:general_const_law_discrete_1} and
in the computations we verify that $\sum_{j=1}^n x_{h,j} = 1$
holds to a satisfactory extent.
Of course, we can still introduce normalized discrete mole fractions
$x_{h,i}^{\textrm{nm}} := x_{h,i} / \textstyle\sum_{j=1}^n x_{h,j}$.
Using \cref{eq:general_const_law_2}
and since $c_i = c_T x_i$, a discrete expression for the concentrations
(as a function of $p_h, \bar{x}_h$ only) is
\begin{equation} \label{eq:discrete_conc}
	c_{h,i} := 
	\Big[ \textstyle\sum_{j=1}^n 
	x_{h,j}^{\textrm{nm}}
	V_j(T, p_h, x_{h,1}^{\textrm{nm}}, \ldots, 
	x_{h,n}^{\textrm{nm}})
	\Big]^{-1} x_{h,i}^{\textrm{nm}}
	\quad \forall i \in \{ 1, \ldots, n \}.
\end{equation}

Analogously to \cref{eq:b_cts_defn} our discretization
will contain terms involving $\div(\Psi_h K_{h,i})$ where
$K_{h,i} \in V_{0h}^J \subset H_0(\div; \Omega)$ 
and $\Psi_h$ is a discrete density reciprocal.
To ensure that $\Psi_h K_{h,i}$ admits a weak divergence we introduce a
finite element space $R_h \subset W^{1, \infty}(\Omega)$ and seek
$\Psi_h \in R_h$ (c.f.~\cref{asm:conc_regularity}).
The regularity $\Psi_h \in W^{1, \infty}(\Omega)$ implies that
$\div(\Psi_h K_{h,i}) \in L^2(\Omega)$.
The equation used to determine $\Psi_h$ is given
below in \cref{eq:full_monolithic_problem_4}.

Recall from \cref{sec:thermo} that constraints must be introduced to
fix the degrees of freedom
$\mu_1^{\textrm{aux}}, \ldots, \mu_n^{\textrm{aux}}, p^{\textrm{aux}}$,
which were absorbed into the enlarged space $P_h' \times (U_h')^n$.
These $n+1$ degrees of freedom require $n+1$ constraints,
denoted very generally by $F(p_h, \bar{\mu}_h, \bar{x}_h) = 0$
where $F : P_h' \times (U_h')^n \times (X_h)^n \to \R^{n+1}$
is a user-chosen function.
For example, integral constraints on the concentrations
(c.f.~\cref{eq:total_moles_constraint})
can be imposed using integrals of
$c_{h,i}$ in \cref{eq:discrete_conc}.
The Gibbs--Duhem relation implies that
only $n$ constraints are required;
we need $n+1$ constraints here since we are solving for all $n$
mole fractions despite only $n-1$ of them being independent.
Hence, one of our constraints is not physical.
Instead, it reflects our desire for the discrete mole fractions to sum to unity.
In particular, satisfaction of \cref{eq:general_const_law_discrete_1}
does not ensure that $\sum_{j=1}^n x_{h,j} = 1$ holds exactly in $\Omega$,
but we can still enforce that this equality holds on average over
$\Omega$, i.e.~we consider
\begin{equation} \label{eq:mfs_constraint}
	\textstyle\int_{\Omega} \big( 1 - \textstyle\sum_{j=1}^n x_{h,j} \big)
	\mathop{\mathrm{d}x} = 0.
\end{equation}
In this work we shall take $F$ to impose \cref{eq:mfs_constraint} along with $n$
physical constraints.

In \cref{sec:discretization} we assumed homogeneous Dirichlet boundary data.
We do not make this assumption here, as the inhomogeneous case has
non-trivial consequences in our monolithic discretization,
discussed further below.
The discrete solution $(v_h, \bar{J}_h) \in V_h$ generally cannot
satisfy the boundary conditions
\cref{eq:sosm_strong2_bcs} exactly. Instead, we consider
\begin{equation} \label{eq:sosm_strong_bcs_discrete}
	v_h = v_{h,D}
	\quad \textrm{and} \quad
	J_{h,i} \cdot n = J_{h, D, i} \cdot n
	\quad \textrm{on} \ \partial \Omega
	\quad \forall i \in \{1, \ldots, n\},
\end{equation}
where $(v_{h,D}, \bar{J}_{h, D}) \in V_h$ are fixed discrete lifting functions,
which, by a method such as interpolation, are chosen to
approximately satisfy the boundary conditions in \cref{eq:sosm_strong2_bcs},
and satisfy the compatibility conditions
$M_i^{-1} \int_{\partial \Omega} J_{h,D,i} \cdot n \mathop{\mathrm{d}s}
= \int_{\Omega} r_i \mathop{\mathrm{d}x}$.

Finally, to state the full monolithic discrete SOSM problem,
we introduce a discrete transport matrix
$\bm{M}_{ij}^{[\bar{c}_h]}$ which is computed using the formula
\cref{eq:transport_m_def} but with the discrete concentrations $\bar{c}_h$
(recall \cref{eq:discrete_conc}).
Since $\bar{c}_h$ is a function of $p_h, \bar{x}_h$, so too is 
$\bm{M}_{ij}^{[\bar{c}_h]}$.
We consider discrete analogues of 
\cref{eq:a_o_cts_defn,eq:a_cts_defn,eq:b_cts_defn}
which employ $\bar{c}_h$ and $\Psi_h$:
\begin{align}
&\begin{cases}
	\begin{split}
		&a_o^{[\bar{c}_h, \Psi_h]} : V_h \times V_{0h} \to \R, \\
		&a_o^{[\bar{c}_h, \Psi_h]}((v_h, \bar{J}_h), (u_h, \bar{K}_h)) :=
		\begin{aligned}[t]
			&\gamma \big( v_h - \Psi_h \textstyle\sum_{j=1}^n J_{h,j},
			u_h - \Psi_h \textstyle\sum_{i=1}^n K_{h,i} \big)_{\Omega} \\
			&+ \textstyle\sum_{i,j=1}^n
			\big( \widetilde{\bm{M}}_{ij}^{[\bar{c}_h]}
			 J_{h,j}, K_{h,i} \big)_{\Omega},
		\end{aligned} \\
		&\textrm{where }
		\widetilde{\bm{M}}_{ij}^{[\bar{c}_h]} := 
		\bm{M}_{ij}^{[\bar{c}_h]} / (M_i M_j c_{h,i} c_{h,j}),
	\end{split}
\end{cases} \\
&\begin{cases} \label{eq:a_discrete_defn}
	\begin{split}
		&a^{[\bar{c}_h, \Psi_h]} : V_h \times V_{0h} \to \R, \\
		&a^{[\bar{c}_h, \Psi_h]}((v_h, \bar{J}_h), (u_h, \bar{K}_h)) 
		:= a_v(v_h, u_h) + a_o^{[\bar{c}_h, \Psi_h]}
		((v_h, \bar{J}_h), (u_h, \bar{K}_h)), \\
		&\textrm{where $a_v$ is defined in \cref{eq:a_v_cts_defn}},
	\end{split}
\end{cases} \\
&\begin{cases} \label{eq:b_psi_h_defn}
	\begin{split}
		&b^{[\Psi_h]} : V_h \times Q_h' \to \R, \\
		&b^{[\Psi_h]}((u_h, \bar{K}_h), (p_h, \bar{\mu}_h)) :=
		\begin{aligned}[t]
			&-(p_h, \div u_h)_{\Omega}
			+ \textstyle{\sum_{i=1}^n} (p_h, \div(\Psi_h K_{h,i}))_{\Omega} \\
			&- \textstyle{\sum_{i=1}^n}
			(M_i^{-1} \mu_{h,i}, \div K_{h,i})_{\Omega}.
		\end{aligned}
	\end{split}
\end{cases}
\end{align}

The monolithic problem is:
find $((v_h, \bar{J}_h), (p_h, \bar{\mu}_h), \bar{x}_h, \Psi_h)
\in V_h \times Q_h' \times (X_h)^n \times R_h$
such that
$(v_h, \bar{J}_h)$
satisfies the boundary conditions in \cref{eq:sosm_strong_bcs_discrete}
and for all test functions
$((u_h, \bar{K}_h), (q_h, \bar{w}_h), \bar{y}_h, r_h)
\in V_{0h} \times Q_h' \times (X_h)^n \times R_h$ there holds
{\allowdisplaybreaks
\begin{subequations} \label{eq:full_monolithic_problem}
	\begin{align}
		a^{[\bar{c}_h, \Psi_h]}((v_h, \bar{J}_h), (u_h, \bar{K}_h))
		+ b^{[\Psi_h]}((u_h, \bar{K}_h), (p_h, \bar{\mu}_h))
		&= \textstyle\sum\limits_{i=1}^n (M_i c_{h,i} f, u_h)_{\Omega},
		\label{eq:full_monolithic_problem_1} \\
		b^{[\Psi_h]}((v_h, \bar{J}_h), (q_h, \bar{w}_h))
		+ b_{dc}^{[\Psi_h]}((v_h, \bar{J}_h), (q_h, \bar{w}_h))
		&= -\textstyle\sum\limits_{i=1}^n (w_{h,i}, r_i)_{\Omega},
		\label{eq:full_monolithic_problem_2} \\
		(\mu_{h,i} - G_i(T, p_h, x_{h,1}, \ldots, x_{h,n}), 
		y_{h,i})_{\Omega}
		&= 0 \quad \forall i \in \{ 1, \ldots, n \},
		\label{eq:full_monolithic_problem_3} \\
		(1 / \Psi_h - \textstyle\sum_{i=1}^n M_i c_{h,i}, r_h)_{\Omega}
		&= 0, \label{eq:full_monolithic_problem_4} \\
		F(p_h, \bar{\mu}_h, \bar{x}_h) &= 0.
		\label{eq:full_monolithic_problem_5}
	\end{align}
\end{subequations}
}%
Here $b_{dc}^{[\Psi_h]}((v_h, \bar{J}_h), (q_h, \bar{w}_h))$
are \textit{density consistency terms}, described below.
As in \cref{sec:discretization}, for $V_h \times Q_h'$ we employ
spaces satisfying
\cref{lemma:discrete_infsup,lemma:discrete_coercivity}.
In practice this leads to spaces of degree $k \geq 1$
being employed for $V_h$ and $k-1$ for $Q_h'$.
In this case we also employ degree $k-1$ spaces for $X_h$ and $R_h$.
This is because $\bar{x}_h, \Psi_h$ are related to $(p_h, \bar{\mu}_h) \in 
Q_h'$ through $L^2$-projections of algebraic equations; we do not expect to 
gain approximation power by using higher degree spaces for these 
unknowns.

To motivate the density consistency terms, recall that 
we enriched the discrete pressure and chemical potential spaces to contain all
constant functions (see \cref{eq:discrete_cp_p_spaces_constants}).
This may have seemed harmless, since at the continuous level the SOSM 
equations \cref{eq:sosm_strong2} are invariant under shifts in the pressure and 
chemical potential by additive constants.
One can also verify that, at the discrete level, shifting $(p_h, \bar{\mu}_h)$ by
constants leaves \cref{eq:full_monolithic_problem_1} unchanged
(integrate by parts and observe $(u_h, \bar{K}_h)$ have vanishing normal 
components on $\partial \Omega$).
However, we must also ensure that using $Q_h'$ instead of $Q_h$
does not affect \cref{eq:full_monolithic_problem_2}, i.e.~that shifting 
$(q_h, \bar{w}_h)$ by constants leaves \cref{eq:full_monolithic_problem_2} 
unchanged.
Since $(v_h, \bar{J}_h)$ do not have vanishing normal components
on $\partial \Omega$ and instead satisfy the boundary conditions in
\cref{eq:sosm_strong_bcs_discrete}, this invariance property
will hold provided we take
\begin{equation} \label{eq:dc_terms}
	 b_{dc}^{[\Psi_h]}((v_h, \bar{J}_h), (q_h, \bar{w}_h))
	 := \textstyle\int_{\partial \Omega} q_h
	 \big( v_h - \textstyle\sum_{i=1}^n \Psi_h J_{h,i} \big) \cdot n
	 \mathop{\mathrm{d}s}.
\end{equation}
We call these density consistency terms, since in general
$v_h \cdot n = \textstyle\sum_{i=1}^n \Psi_h J_{h,i} \cdot n$
holds only approximately on $\partial \Omega$, whereas at 
the continuous level
$v \cdot n = \textstyle\sum_{i=1}^n \Psi J_i \cdot n$ holds exactly
due to the mass-average constraint \cref{eq:massavg}.
We find empirically in \cref{sec:man_sln} that these terms are crucial for 
ensuring that Newton's method converges.
We prefer to use a discrete density reciprocal $\Psi_h$ instead of a discrete 
density $\rho_h$, as this enables the terms in \cref{eq:dc_terms} to be integrated
exactly with quadrature rules of a sufficiently high degree.
However, in our experience discretizing $\rho$ instead of $\Psi$ has no tangible
impact on the performance of our numerical schemes; both choices seem viable 
in practice.

Let $D = \dim (V_{0h} \times Q_h' \times (X_h)^n \times R_h)$.
Once a basis has been chosen, the discrete problem
in \cref{eq:full_monolithic_problem} constitutes $D + n + 1$ equations
in $D$ unknowns.
However, $n +1$ of the equations obtained from
\cref{eq:full_monolithic_problem_2}
can be eliminated, since the compatibility conditions on $J_{h,D,i}$ and our 
construction of $b_{dc}^{[\Psi_h]}$ ensures that
\cref{eq:full_monolithic_problem_2} holds automatically when
$(q_h, \bar{w}_h)$ are constants.
We solve the resulting system of $D$ equations in $D$ unknowns using
Newton's method.
The constraints in \cref{eq:full_monolithic_problem_5} will typically 
(e.g.~when imposing integral constraints) lead to dense rows in the Jacobian
at each Newton iteration;
these could be undesirable when using direct linear solvers.
We avoid these dense rows by employing an auxiliary sparse Jacobian that is 
obtained by replacing the constraints in 
\cref{eq:full_monolithic_problem_5}
with $n+1$ auxiliary constraints that fix the values of $p_h, \bar{\mu}_h$ 
at a mesh node.
The true Jacobian is then a rank $n+1$ update of the auxiliary Jacobian
and we compute the action of its inverse using the Woodbury 
formula \cite{hager1989updating}.

\section{Numerical examples} \label{sec:numerical}

In this section we test our methods.
The experiments were implemented using Firedrake
\cite{FiredrakeUserManual} and PETSc
\cite{petsc-user-ref,dalcinpazklercosimo2011}.
We solved the linear systems using the LU solver MUMPS
\cite{amestoy2001fully}.
Code is available at
{\color{blue}\url{https://bitbucket.org/abaierr/multicomponent_code}}
and software versions are archived on Zenodo \cite{zenodo}.

\subsection{Manufactured solution} \label{sec:man_sln}

To study the spatial errors of the discretization, we consider a manufactured
solution with $n=2$ species.
We conduct tests in two and three spatial dimensions on the domain 
$\Omega = (0, 1)^d$.
The thermodynamic constitutive law is taken to be that of an ideal
gaseous mixture \cite{doble2007perry,guggenheim1967thermodynamics};
hence in \cref{eq:general_const_law} we employ
$G_i(T, p, x_1, x_2) = RT \ln(x_i p)$ and
$V_i(T, p, x_1, x_2) = RT / p$.

We employ the manufactured solution introduced in
\cite[sect.~4.4]{aznaran2024finite}, which requires that $RT=1$.
If the diffusion coefficients are parametrized by $\mathscr{D}_{ij} = D_i D_j$ 
for $D_j > 0$, and $g : \R^d \to \R$ is a smooth function, this solution is 
given by $c_i = \exp(g / D_i)$ and $v_i = D_i \nabla g$.
Specification of $c_i$ and $v_i$ implicitly defines all other unknowns.
We use $D_1 = 1/2, D_2 = 2$.
For $d=2$ we set $g(x, y) = \sin(\pi x) \sin(\pi y)$
and for $d=3$ we set $g(x, y, z) = \sin(\pi x) \sin(\pi y) \sin(\pi z)$.
For the molar masses we use $M_1 = M_2 = 1$, for the viscosities
$\eta = \zeta = 0.1$ and for the augmentation parameter $\gamma = 10$.

We conduct tests for the Picard linearized problem
\cref{eq:discrete_problem} and the nonlinear problem
\cref{eq:full_monolithic_problem}.
In the Picard linearized setting, we use the concentrations
(and quantities $\rho, \Psi$ and $\widetilde{\bm{M}}^{\gamma}_{ij}$)
of the exact solution.
We employ a uniform structured mesh consisting of
triangles when $d=2$ and hexahedra when $d=3$.
For the barycentric velocity and pressure we
use the degree $k=4$ generalized Taylor--Hood pair
\cite[sect.~54.4.1]{ern2021finiteII}.
For the mass flux and chemical potentials we use the
$\mathbb{RT}_k$--$\mathbb{DG}_{k-1}$ pair \cite{raviart1977mixed}
when $d=2$ and its hexahedral analogue \cite{nedelec1980mixed} when $d=3$.
In the nonlinear setting,
we discretize the mole fractions (resp.~density reciprocal) using
discontinuous (resp.~continuous) degree $k-1=3$ polynomial spaces.
As constraints in \cref{eq:full_monolithic_problem_5} we impose
$\int_{\Omega} c_{h,i} \mathop{\mathrm{d}x} = 
\int_{\Omega} c_i \mathop{\mathrm{d}x} \ \forall i$ and
\cref{eq:mfs_constraint}.
We use Newton's method with an absolute tolerance on the residual of $10^{-10}$
in the Euclidean norm.
The initial guess we supply to Newton's method is the $L^2$-projection
of the exact solution, since our purpose here is to study the spatial errors of 
the discretization, not the root-finding ability of Newton's method from a poor
initial guess.
Depending on the mesh size this led to Newton's method
terminating in 1 to 3 iterations.

We uniformly refine the mesh starting from an initial mesh size of $h=2^{-3}$ in 
two dimensions and $h=2^{-1}$ in three dimensions.
We measure the following errors:
\begin{gather*}
	E_v := \norm{v - v_h}_{L^2(\Omega)^d}, \quad
	E_{\nabla v} := \norm{\nabla v - \nabla v_h}_{L^2(\Omega)^{d \times d}}, \quad
	E_p := \norm{p- p_h}_{L^2(\Omega)}, \\
	E_{\bar{J}} := \sqrt{ \textstyle\sum_{i=1}^n 
		\norm{J_i - J_{h,i}}_{L^2(\Omega)^d}^2}, \quad
	E_{\bar{\mu}} := \sqrt{ \textstyle\sum_{i=1}^n 
	\norm{\mu_i - \mu_{h,i}}_{L^2(\Omega)}^2}.
\end{gather*}
We also measure the error in the mass-average constraint
(recall \cref{eq:massavg}) by means of
$E_{\textrm{MA}} := 
\norm{v_h - \Psi \textstyle\sum_{i=1}^n J_{h,i}}_{L^2(\Omega)^d}$
for the Picard linearized problem and
$E_{\textrm{MA}} := 
\norm{v_h - \Psi_h \textstyle\sum_{i=1}^n J_{h,i}}_{L^2(\Omega)^d}$
for the nonlinear problem.
For the nonlinear problem we also measure the mole fraction error
$E_{\bar{x}} := \sqrt{ \textstyle\sum_{i=1}^n 
\norm{x_i - x_{h,i}}_{L^2(\Omega)}^2}$.

%%%%%%%%%%%%%%%%%%%%%%%%%%%%%%%%%%%%%%%%%%%%%%%%%%%%%%%%%%%%%%%%%%%%%%%%%%%%%%%%%%
\begin{table} \label{table:picard_linearized}
\setlength{\tabcolsep}{3pt}
\renewcommand{\arraystretch}{1.25}
\caption{Computed spatial errors and associated rates
for the Picard linearized test case in \cref{sec:man_sln}.}
\begin{center}
\footnotesize
\begin{tabular}{ccccccc}
$h$ & $E_v$ (rate) & $E_{\nabla v}$ (rate) & $E_p$ (rate)
& $E_{\bar{J}}$ (rate) & $E_{\bar{\mu}}$ (rate) & $E_{\textrm{MA}}$ (rate)
\\ \Xhline{2\arrayrulewidth}
\multicolumn{7}{l}
{Spatial dimension $d=2$}
\\
\makecell{\ $2^{-3}$ \\ \ $2^{-4}$ \\ \ $2^{-5}$ \\ \ $2^{-6}$ \\ \ $2^{-7}$}
& \makecell[l]
{1.8e-05 \\ 5.2e-07 (5.1) \\ 1.6e-08 (5.0) \\ 5.0e-10 (5.0) \\ 1.6e-11 (4.9)}
& \makecell[l]
{1.9e-3 \\ 1.1e-4 (4.2) \\ 6.3e-6 (4.1) \\ 3.9e-7 (4.0) \\ 2.6e-8 (3.9)}
& \makecell[l]
{4.4e-4 \\ 2.6e-5 (4.1) \\ 1.6e-6 (4.0) \\ 9.8e-8 (4.0) \\ 1.0e-8 (3.3)}
& \makecell[l]
{5.0e-4 \\ 3.0e-5 (4.1) \\ 1.8e-6 (4.0) \\ 1.1e-7 (4.0) \\ 7.1e-9 (4.0)}
& \makecell[l]
{1.0e-4 \\ 5.5e-6 (4.2) \\ 3.2e-7 (4.1) \\ 1.9e-8 (4.0) \\ 5.9e-9 (1.7)}
& \makecell[l]
{1.5e-4 \\ 9.1e-6 (4.1) \\ 5.6e-7 (4.0) \\ 3.5e-8 (4.0) \\ 2.2e-9 (4.0)}
\\
\multicolumn{7}{l}{Spatial dimension $d=3$}
\\
\makecell{\ $2^{-1}$ \\ \ $2^{-2}$ \\ \ $2^{-3}$ \\ \ $2^{-4}$}
& \makecell[l]
{4.6e-03 \\ 1.1e-04 (5.4) \\ 3.3e-06 (5.0) \\ 9.5e-08 (5.1)}
& \makecell[l]
{1.1e-1 \\ 5.2e-3 (4.3) \\ 3.9e-4 (3.7) \\ 2.0e-5 (4.3)}
& \makecell[l]
{3.6e-2 \\ 1.7e-3 (4.4) \\ 1.4e-4 (3.6) \\ 7.9e-6 (4.1)}
& \makecell[l]
{5.7e-2 \\ 2.4e-3 (4.6) \\ 2.1e-4 (3.5) \\ 1.3e-5 (4.1)}
& \makecell[l]
{9.8e-3 \\ 3.4e-4 (4.9) \\ 1.7e-5 (4.4) \\ 6.2e-7 (4.7)}
& \makecell[l]
{1.8e-2 \\ 9.2e-4 (4.3) \\ 6.1e-5 (3.9) \\ 3.7e-6 (4.0)}
\\
\end{tabular}
\end{center}
\end{table}
%%%%%%%%%%%%%%%%%%%%%%%%%%%%%%%%%%%%%%%%%%%%%%%%%%%%%%%%%%%%%%%%%%%%%%%%%%%%%%%%%%

The numerical results for the Picard linearized problem are shown in
\Cref{table:picard_linearized}.
Our chosen finite element spaces satisfy the hypotheses of
\cref{lemma:discrete_infsup,lemma:discrete_coercivity}, and
the quasi-optimal error bound in \cref{eq:discrete_quasioptimal} therefore
predicts that
$E_v, E_{\nabla v}, E_p, E_{\bar{J}}, E_{\bar{\mu}}$ and $E_{\textrm{MA}}$
go to zero as
$\mathcal{O}(h^k) = \mathcal{O}(h^4)$.
This theoretical prediction is consistent with the empirical convergence rates
shown in \Cref{table:picard_linearized}.
When $d=2$ and $h=2^{-7}$ the rates for $E_p$ and $E_{\bar{\mu}}$ appear to decay
below $\mathcal{O}(h^4)$, but we believe this is due to rounding error and solver tolerances.
Moreover, it appears that $E_v$ converges
as $\mathcal{O}(h^{k+1}) = \mathcal{O}(h^5)$, and we hypothesize that it may be
possible to prove this rigorously using an Aubin--Nitsche technique.

%%%%%%%%%%%%%%%%%%%%%%%%%%%%%%%%%%%%%%%%%%%%%%%%%%%%%%%%%%%%%%%%%%%%%%%%%%%%%%%%%%
\begin{table} \label{table:full_nonlinear}
\setlength{\tabcolsep}{3pt}
\renewcommand{\arraystretch}{1.25}
\caption{Computed spatial errors and associated rates
for the nonlinear test case in \cref{sec:man_sln}.}
\begin{center}
\footnotesize
\begin{tabular}{cccccccc}
$h$ & $E_v$ (rate) & $E_{\nabla v}$ (rate) & $E_p$ (rate)
& $E_{\bar{J}}$ (rate) & $E_{\bar{\mu}}$ (rate) & $E_{\textrm{MA}}$ (rate)
& $E_{\bar{x}}$ (rate)
\\ \Xhline{2\arrayrulewidth}
\multicolumn{7}{l}
{Spatial dimension $d=2$}
\\
\makecell{\ $2^{-3}$ \\ \ $2^{-4}$ \\ \ $2^{-5}$ \\ \ $2^{-6}$ \\ \ $2^{-7}$}
& \makecell[l]
{2.0e-05 \\ 6.5e-07 (5.0) \\ 2.7e-08 (4.6) \\ 1.4e-09 (4.3) \\ 8.2e-11 (4.1)}
& \makecell[l]
{2.0e-3 \\ 1.3e-4 (4.0) \\ 1.1e-5 (3.5) \\ 1.2e-6 (3.2) \\ 1.5e-7 (3.1)}
& \makecell[l]
{4.6e-4 \\ 3.0e-5 (3.9) \\ 2.5e-6 (3.6) \\ 2.6e-7 (3.3) \\ 3.1e-8 (3.1)}
& \makecell[l]
{5.3e-4 \\ 3.5e-5 (3.9) \\ 3.0e-6 (3.5) \\ 3.3e-7 (3.2) \\ 3.9e-8 (3.1)}
& \makecell[l]
{1.0e-4 \\ 8.3e-6 (3.6) \\ 9.2e-7 (3.2) \\ 1.1e-7 (3.0) \\ 1.4e-8 (3.0)}
& \makecell[l]
{2.1e-4 \\ 1.6e-5 (3.8) \\ 1.5e-6 (3.4) \\ 1.7e-7 (3.2) \\ 2.0e-8 (3.0)}
& \makecell[l]
{9.4e-06 \\ 5.7e-07 (4.0) \\ 3.6e-08 (4.0) \\ 2.2e-09 (4.0) \\ 1.4e-10 (4.0)}
\\
\multicolumn{7}{l}{Spatial dimension $d=3$}
\\
\makecell{\ $2^{-1}$ \\ \ $2^{-2}$ \\ \ $2^{-3}$ \\ \ $2^{-4}$}
& \makecell[l]
{5.9e-03 \\ 1.4e-04 (5.4) \\ 3.7e-06 (5.3) \\ 1.1e-07 (5.1)}
& \makecell[l]
{8.4e-2 \\ 6.1e-3 (3.8) \\ 4.1e-4 (3.9) \\ 2.0e-5 (4.3)}
& \makecell[l]
{2.5e-2 \\ 1.7e-3 (3.9) \\ 1.4e-4 (3.6) \\ 7.9e-6 (4.1)}
& \makecell[l]
{6.0e-2 \\ 2.5e-3 (4.6) \\ 2.1e-4 (3.6) \\ 1.3e-5 (4.1)}
& \makecell[l]
{7.1e-3 \\ 3.3e-4 (4.5) \\ 1.7e-5 (4.3) \\ 6.3e-7 (4.7)}
& \makecell[l]
{1.4e-2 \\ 1.2e-3 (3.6) \\ 7.5e-5 (4.0) \\ 4.7e-6 (4.0)}
& \makecell[l]
{1.3e-03 \\ 6.8e-05 (4.2) \\ 3.5e-06 (4.3) \\ 2.1e-07 (4.0)}
\\
\end{tabular}
\end{center}
\end{table}
%%%%%%%%%%%%%%%%%%%%%%%%%%%%%%%%%%%%%%%%%%%%%%%%%%%%%%%%%%%%%%%%%%%%%%%%%%%%%%%%%%

The results for the nonlinear problem are shown in
\Cref{table:full_nonlinear}.
When $d=2$ it appears that 
$E_v, E_{\nabla v}, E_p, E_{\bar{J}}, E_{\bar{\mu}}$ and $E_{\textrm{MA}}$
converge suboptimally by a factor of $h$, i.e.~they converge as
$\mathcal{O}(h^{k-1}) = \mathcal{O}(h^3)$ ($\mathcal{O}(h^4)$ for $E_v$).
When $d=3$ these quantities appear to instead converge as 
$\mathcal{O}(h^4)$ ($\mathcal{O}(h^5)$ for $E_v$), but we anticipate that if
the mesh were to be refined further, they would also begin to converge suboptimally
by a factor of $h$.
Interestingly, the mole fraction error $E_{\bar{x}}$ 
seems to converge optimally as $\mathcal{O}(h^4)$ for $d=2,3$
despite the other quantities converging suboptimally when $d=2$.

We hypothesize that the suboptimal rates in
$E_v, E_{\nabla v}, E_p, E_{\bar{J}}, E_{\bar{\mu}}$ and $E_{\textrm{MA}}$,
which only appear when solving the nonlinear problem,
are due to the presence of $\nabla \Psi_h$ in our discretization. 
More precisely, the form $b^{[\Psi_h]}$ in
\cref{eq:b_psi_h_defn} contains the terms
\begin{equation} \label{eq:problematic_term}
	\big(p_h, \div(\Psi_h K_{h,i})\big)_{\Omega} = 
	\big(p_h, (\nabla \Psi_h) \cdot K_{h,i}\big)_{\Omega}
	+ \big(p_h, \Psi_h \div K_{h,i}\big)_{\Omega}.
\end{equation}
The culprit seems to be the term
$\big(p_h, (\nabla \Psi_h) \cdot K_{h,i}\big)_{\Omega}$.
In additional experiments (not shown here), we find that if
$\nabla \Psi_h$ is replaced by $\nabla \Psi$ in this term, we recover optimal 
rates of convergence for the nonlinear problem.
Of course, this is useless in practice; we only know what $\nabla \Psi$ is here
because we are using a manufactured solution.
However, this does provide a heuristic explanation for these suboptimal rates.
We expect that $\nabla \Psi_h$ converges to $\nabla \Psi$
slower by a factor of $h$ than $\Psi_h$ converges to $\Psi$,
which ``pollutes'' $b^{[\Psi_h]}$.
Also, we expect that $\Psi_h$ can, at best, converge at the same rate that
$p_h, \bar{\mu}_h$ converge, since the concentrations
(hence also $\Psi$) are related to $p, \bar{\mu}$ through algebraic 
equations \cref{eq:general_const_law}.
The ``pollution'' in $b^{[\Psi_h]}$ caused by $\nabla \Psi_h$ therefore, at best,
exceeds the optimal approximation error for $p, \bar{\mu}$ by a factor of $h$, 
leading to suboptimal rates.
Discretizing the density $\rho$ instead of its reciprocal 
$\Psi$ does not appear to alleviate these suboptimal rates,
and more broadly it does not seem straightforward to devise a numerical scheme 
that circumvents this difficulty.
The terms in \cref{eq:problematic_term} arise due to coupling of the Stokes 
and OSM equations;
they would not appear in a formulation of the isobaric OSM problem alone.

The density consistency terms in
\cref{eq:full_monolithic_problem_2}
are crucial in this example.
Without them, we find that Newton's method diverges
for the values of $d$ and $h$ from \Cref{table:full_nonlinear}.

\subsection{Benzene-cyclohexane} \label{sec:benzene_cyclohexane}

We consider the three-dimensional mixing of liquid flows of benzene and
cyclohexane in a microfluidic container.
This example is analogous to the two-dimensional benzene-cyclohexane simulation 
in \cite[sect.~4.5]{aznaran2024finite}, but here we consider the more challenging
three-dimensional case.
We refer to benzene and cyclohexane as species 1 and 2, respectively.
Following \cite{aznaran2024finite} we use parameter values
$T=298.15 \ \textrm{K}$,
$\mathscr{D}_{12} = 2.1 \times 10^{-9} \ \textrm{m}^2 / \textrm{s}$,
$\eta = 6 \times 10^{-4} \ \textrm{Pa} \cdot \textrm{s}$
and
$\zeta = 10^{-7} \ \textrm{Pa} \cdot \textrm{s}$.
As molar masses we take $M_1 = 0.078 \ \textrm{kg} / \textrm{mol}$ and
$M_2 = 0.084 \ \textrm{kg} / \textrm{mol}$.

We take the ambient pressure to be
$p^{\textrm{ref}} = 10^5 \ \textrm{Pa}$
and assume that the dependence of the partial molar volumes on
pressure variations and composition can be neglected.
Hence each $V_i(T, p, x_1, x_2)$ is treated as being a constant
(see e.g.~\cite{druet2021global} and references therein).
It follows that $V_i(T, p, x_1, x_2) = 1 / c_i^{\textrm{ref}}$
where $c_i^{\textrm{ref}}$ is the concentration of pure species $i$
at temperature $T$ and pressure $p^{\textrm{ref}}$.
Moreover, benzene and cyclohexane form a non-ideal solution, and
we capture this using a Margules model \cite{doble2007perry} of the form
\begin{align*}
	G_1(T, p, x_1, x_2) &=
	p / c_1^{\textrm{ref}}
	+ RT \ln x_1
	+ RT x_2^2 (A_{12} + 2 (A_{21} - A_{12}) x_1), \\
	G_2(T, p, x_1, x_2) &=
	p / c_2^{\textrm{ref}}
	+ RT \ln x_2
	+ RT x_1^2 (A_{21} + 2 (A_{12} - A_{21}) x_2).
\end{align*}
As in \cite{aznaran2024finite} we set
$A_{12} = 0.4498$,
$A_{21} = 0.4952$,
$M_1 c_1^{\textrm{ref}} = 876 \ \textrm{kg} / \textrm{m}^3$
and
$M_2 c_2^{\textrm{ref}} = 773 \ \textrm{kg} / \textrm{m}^3$.
Since each $G_i$ is linear in $p$, and $V_i$ is independent of $p$,
the value of $p^{\textrm{aux}}$ plays no role in the constitutive relation
\cref{eq:general_const_law} as it can be absorbed into each of the
$\mu_i^{\textrm{aux}}$.
The auxiliary constants in \cref{eq:general_const_law} therefore only
allow us to prescribe $n-1 = 1$ physical constraint.
To demonstrate the flexibility of our method we take this to be
that $M_1 c_{h,1} - M_2 c_{h,2}$ integrates to zero on the outflow boundary
of the container; physically this states that the species have equal average 
densities on the outflow.
The non-physical constraints that we employ are
$\int_{\Omega} p \mathop{\mathrm{d}x} = 0$ and
\cref{eq:mfs_constraint}.

The domain $\Omega$, shown in \Cref{fig:mesh_pressure}, is a chamber with two 
inflow pipes and an outflow pipe.
The length scale of the domain is on the order of millimetres.
Benzene flows in from the back pipe (at $x=-0.5$),
cyclohexane from the side pipe (at roughly $x=1$),
and both flow out of the front pipe (at $x=5.5$).
As boundary conditions, we enforce parabolic profiles on $J_i \cdot n$
at inflow $i$ and on the outflow, and $J_i \cdot n = 0$ elsewhere on the 
boundary.
Instead of specifying the value of the barycentric velocity on the
inflows and outflow, we enforce $\rho v \cdot n = (J_1 + J_2) \cdot 
n$ and $\rho v \times n = 0$ in these regions.
Elsewhere on the boundary we enforce $v = 0$.
The magnitude of the parabolic profile enforced on $J_i \cdot n$ is 
$M_i c_i^{\textrm{ref}} v_i^{\textrm{ref}}$
where, as in \cite{aznaran2024finite},
we symmetrize the molar fluxes so that 
$c_1^{\textrm{ref}} v_1^{\textrm{ref}} = 
c_2^{\textrm{ref}} v_2^{\textrm{ref}}$.
We choose $v_1^{\textrm{ref}} = 10 \ \textrm{{\textmu}m} / \textrm{s}$,
which results in a Reynolds number for the problem of
$\textrm{Re} = 3 \times 10^{-2}$ and a Péclet number of
$\textrm{Pe} = 1 \times 10^{1}$.

\begin{figure}[h] \label{fig:mesh_pressure}
\caption{The domain and mesh used in \cref{sec:benzene_cyclohexane}.
A unit of length on the axes corresponds to a physical length of 2mm.
The volume is colored by the nondimensionalized pressure solution $p$.}
\vspace{0.25em}
\centering
\includegraphics[width=1.0\textwidth]{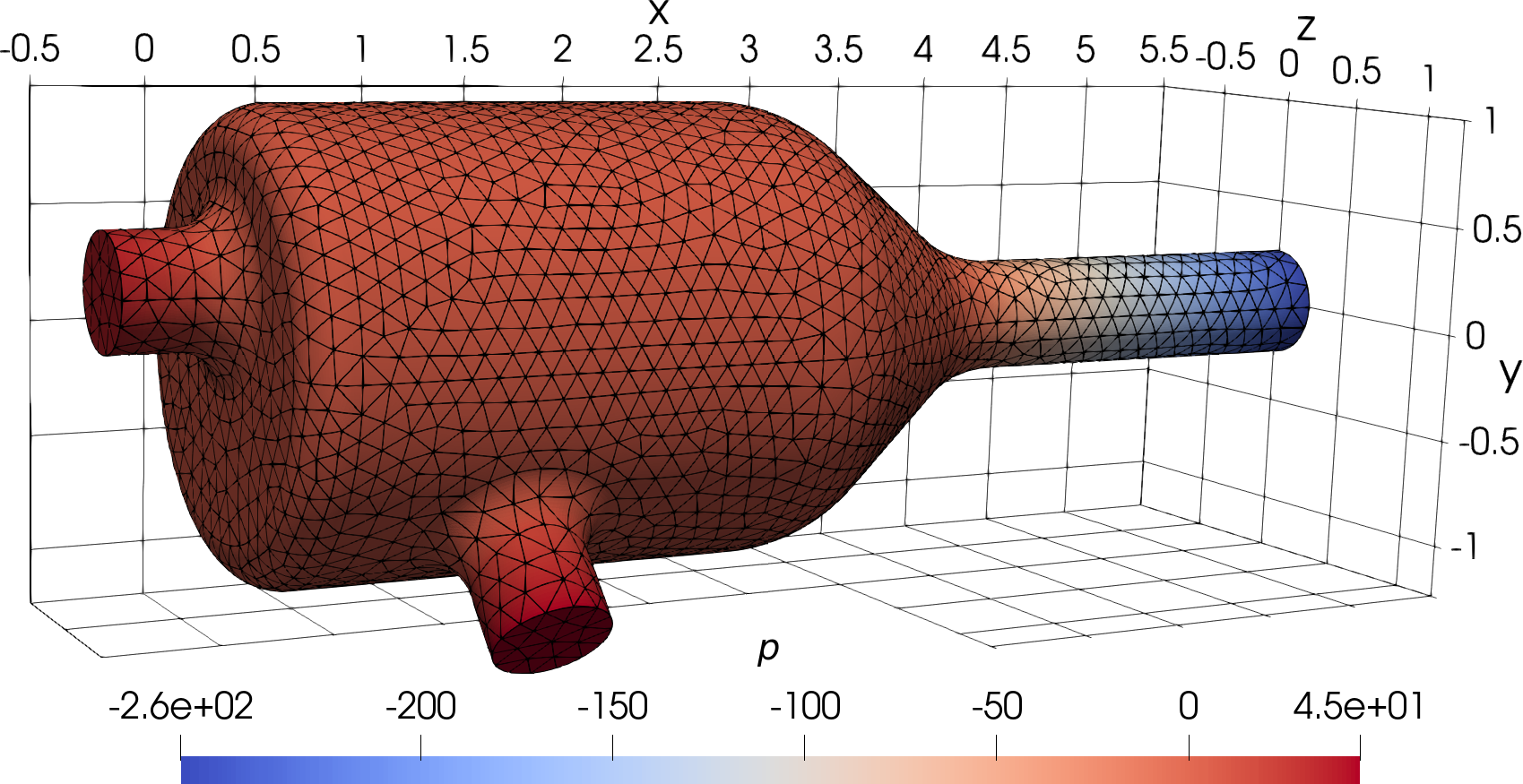}
\end{figure}

We employ a curved tetrahedral mesh of order $5$, shown in
\Cref{fig:mesh_pressure}.
The mesh was generated using ngsPETSc \cite{ngspetsc,schoberl1997netgen}
and OpenCASCADE Technology.
For the barycentric velocity and pressure we
use the degree $k=5$ generalized Taylor--Hood pair
\cite[sect.~54.4.1]{ern2021finiteII}.
For the mass flux and chemical potentials we use the
$\mathbb{BDM}_k$--$\mathbb{DG}_{k-1}$ pair \cite{brezzi1985two,nedelec1986new}.
For the mole fractions (resp.~density reciprocal) we use
discontinuous (resp.~continuous) degree $k-1=4$ polynomial spaces.
We solve a nondimensionalized SOSM system with augmentation parameter 
$\gamma = 0.1$.

We solve the discretized nonlinear problem using Newton's method
with an absolute tolerance on the residual of $10^{-11}$ in the Euclidean norm.
We first solve the problem with a low-order $k=2$ discretization
(on the same curved mesh), and we use the resulting solution as the
initial guess to Newton's method in the high-order $k=5$ case.
With this low-order initial guess,
Newton's method in the $k=5$ case converges in 5 iterations.
We also solve the $k=2$ problem using Newton's method with the same absolute
tolerance, but we employ continuation \cite{deuflhard2011newton}
to aid convergence.
We take five continuation steps by gradually increasing
$v_1^{\textrm{ref}}$. In particular we use values of
$v_1^{\textrm{ref}} = 10^{\theta_{\textrm{cont}} / 4}
\ \textrm{{\textmu}m} / \textrm{s}$
for $\theta_{\textrm{cont}} = 0, 1, 2, 3, 4$.
For our initial guess at the first step $\theta_{\textrm{cont}} = 0$, 
we set the velocity and mass fluxes to be zero,
and assume an equimolar isobaric mixture wherein
$x_{h,1} = x_{h,2} = 0.5$ and $p_h = 0$,
which together with \cref{eq:general_const_law}
determines the $\mu_{h, i}$ and $\Psi_h$.
The five continuation steps $\theta_{\textrm{cont}} = 0, 1, 2, 3, 4$ 
respectively required $5, 5, 6, 6, 7$ Newton iterations.

\begin{figure}[h] \label{fig:species_1}
\caption{A cross-sectional view of the domain in 
\cref{sec:benzene_cyclohexane}.
The volume is colored by the mole fraction $x_1$ of benzene.
Isosurfaces of $x_1$ and streamlines of the nondimensionalized
benzene velocity field $v_1$
are also shown.
The streamlines are colored by the magnitude of $v_1$.}
\vspace{0.25em}
\centering
\includegraphics[width=1.0\textwidth]{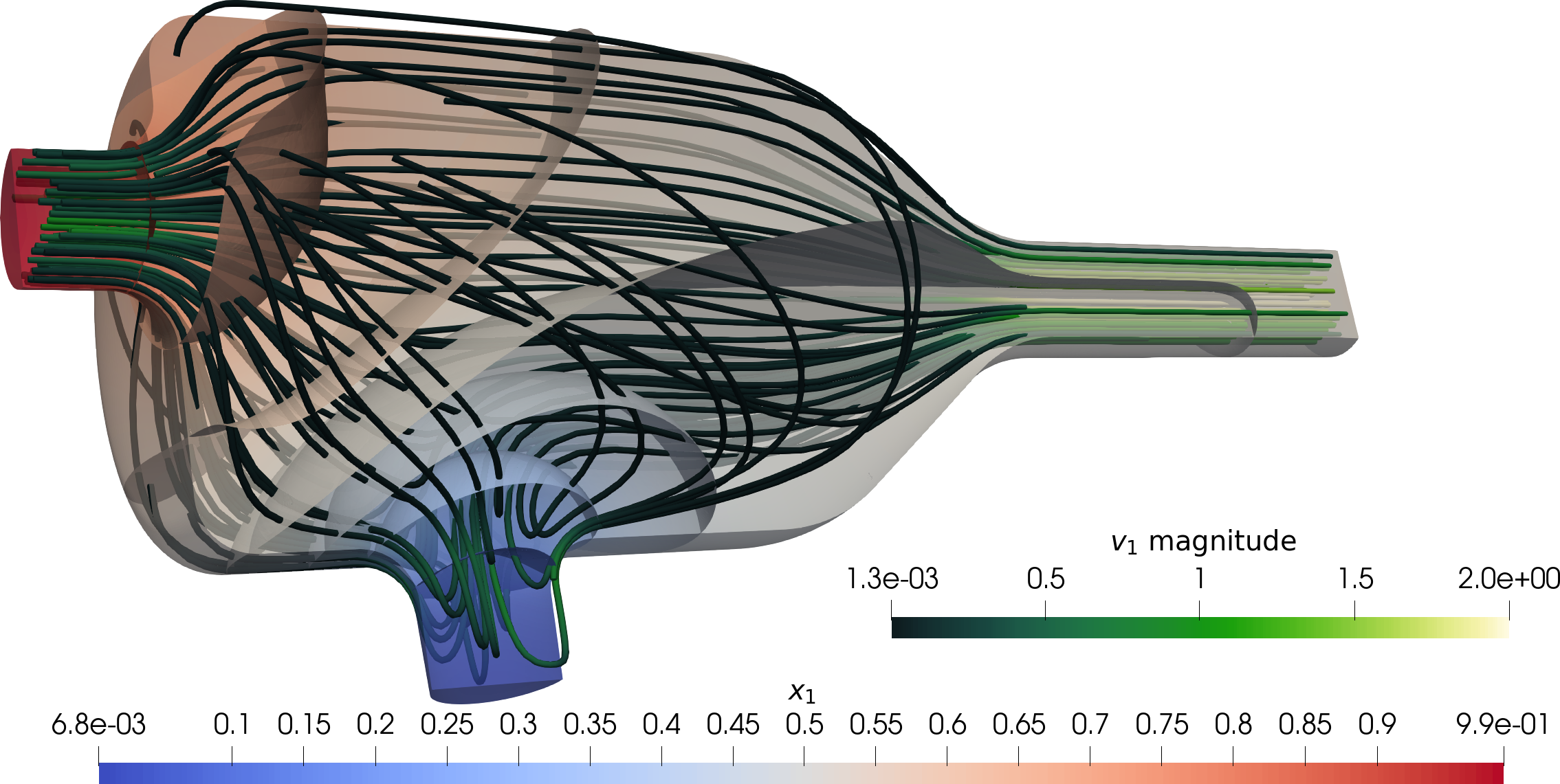}
\end{figure}

The pressure solution, shown in \Cref{fig:mesh_pressure},
is largest at the inlets and at a minimum on the outlet.
The benzene mole fraction and velocity are visualized in \Cref{fig:species_1}.
The benzene mole fraction approaches 1 at the benzene inlet
and a value close to 0.5 at the outlet.
The benzene velocity field is laminar but
exhibits complicated behavior near the cyclohexane inlet; 
our high-order scheme captures this effectively despite the coarse mesh
(the cell diameter is roughly 1/4 the inlet diameter).
Finally, we compute errors in the (nondimensionalized) mass-average 
constraint and in the requirement that the mole fractions sum to unity, 
yielding reassuringly small values of
$\norm{v_h - \Psi_h \textstyle\sum_{i=1}^n J_{h,i}}_{L^2(\Omega)^d} 
= 1.1 \times 10^{-4}$
and
$\norm{1 - \textstyle\sum_{i=1}^n x_{h,i}}_{L^2(\Omega)} 
= 1.6 \times 10^{-6}$.

\subsection{High-order versus low-order discretization} \label{ss:high_vs_low}

To study the advantages of using a high-order discretization,
we repeat the benzene-cyclohexane simulation in \cref{sec:benzene_cyclohexane},
but we vary the polynomial degree $k$ of the finite element spaces.
In particular we vary $k \in \{2, \ldots, 5 \}$, and we compare this to fixing the
degree at $k=2$ and once uniformly refining the mesh (starting with the same mesh 
from \cref{sec:benzene_cyclohexane}).
In all cases we employ a curved mesh of order 5, so that the effect of curving the 
mesh with different orders does not play a role in our experiment.

\begin{figure}[h] \label{fig:high_vs_low}
	\caption{Plots of the $L^2$-errors in the mole fraction and
	(nondimensionalized) mass-average constraints for the experiment of
	\cref{ss:high_vs_low}.
	The blue plot with square markers is obtained by holding the mesh fixed
	and varying $k \in \{2, \ldots, 5 \}$. The red plot with circle markers
	is obtained by holding $k=2$ fixed and once uniformly refining the mesh.}
	\centering
	\includegraphics[width=1.0\textwidth]{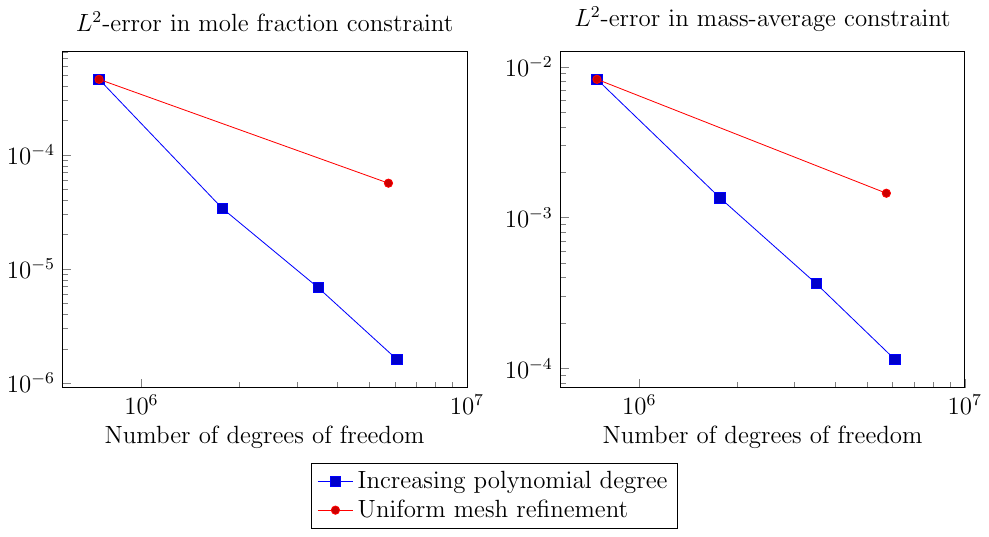}
\end{figure}

To quantify the accuracy of our numerical solutions, we measure 
$L^2$-errors in the mole fraction constraint
$\norm{1 - \textstyle\sum_{i=1}^n x_{h,i}}_{L^2(\Omega)}$
and (nondimensionalized) mass-average constraint
$\norm{v_h - \Psi_h \textstyle\sum_{i=1}^n J_{h,i}}_{L^2(\Omega)^d}$.
In \Cref{fig:high_vs_low} we plot these errors against the
number of degrees of freedom (DOFs) of the finite element spaces.
In blue we plot the approach of holding the mesh fixed and varying
$k \in \{2, \ldots, 5 \}$, and in red the approach of holding $k=2$ fixed
and uniformly refining the mesh.

When $k=2$ on the coarse mesh, the finite element space has $7.4 \times 10^5$
DOFs, while on the fine mesh for $k=2$ there are 
$5.7 \times 10^6$ DOFs.
When $k=5$ on the coarse mesh there are $6.1 \times 10^6$ DOFs,
which is comparable to that of $k=2$ on the fine mesh.
However, we see from \Cref{fig:high_vs_low} that the $k=5$ discretization
achieves significantly lower $L^2$-errors than the $k=2$ discretization on the 
fine mesh.
In fact, even taking $k \in \{3, 4\}$ leads to a greater accuracy than the 
$k=2$ discretization on the fine mesh, and for a much lower DOF count.
These findings are corroborated in \Cref{fig:high_vs_low_2} where, 
taking the $k=5$ solution as a reference solution,
the errors in all fields (against this reference solution)
are again lower for $k \in \{3, 4\}$ on the
coarse mesh than for $k=2$ on the refined mesh.
These findings indicate that a high-order
discretization may be a more efficient choice for a given DOF count,
and motivate the future study of preconditioners for this case.

\begin{figure}[h] \label{fig:high_vs_low_2}
	\caption{Plots of the errors in (nondimensionalized) fields for the 	
		experiment of \cref{ss:high_vs_low}. Errors are computed with respect
		to the $k=5$ solution on the coarse mesh.
		The blue plots with square markers are obtained by holding the mesh fixed
		and varying $k \in \{2, \ldots, 4 \}$. The red plots with circle markers
		are obtained by holding $k=2$ fixed and once uniformly refining the mesh.}
	\centering
	\vspace{1mm}
	\includegraphics[width=1.0\textwidth]
	{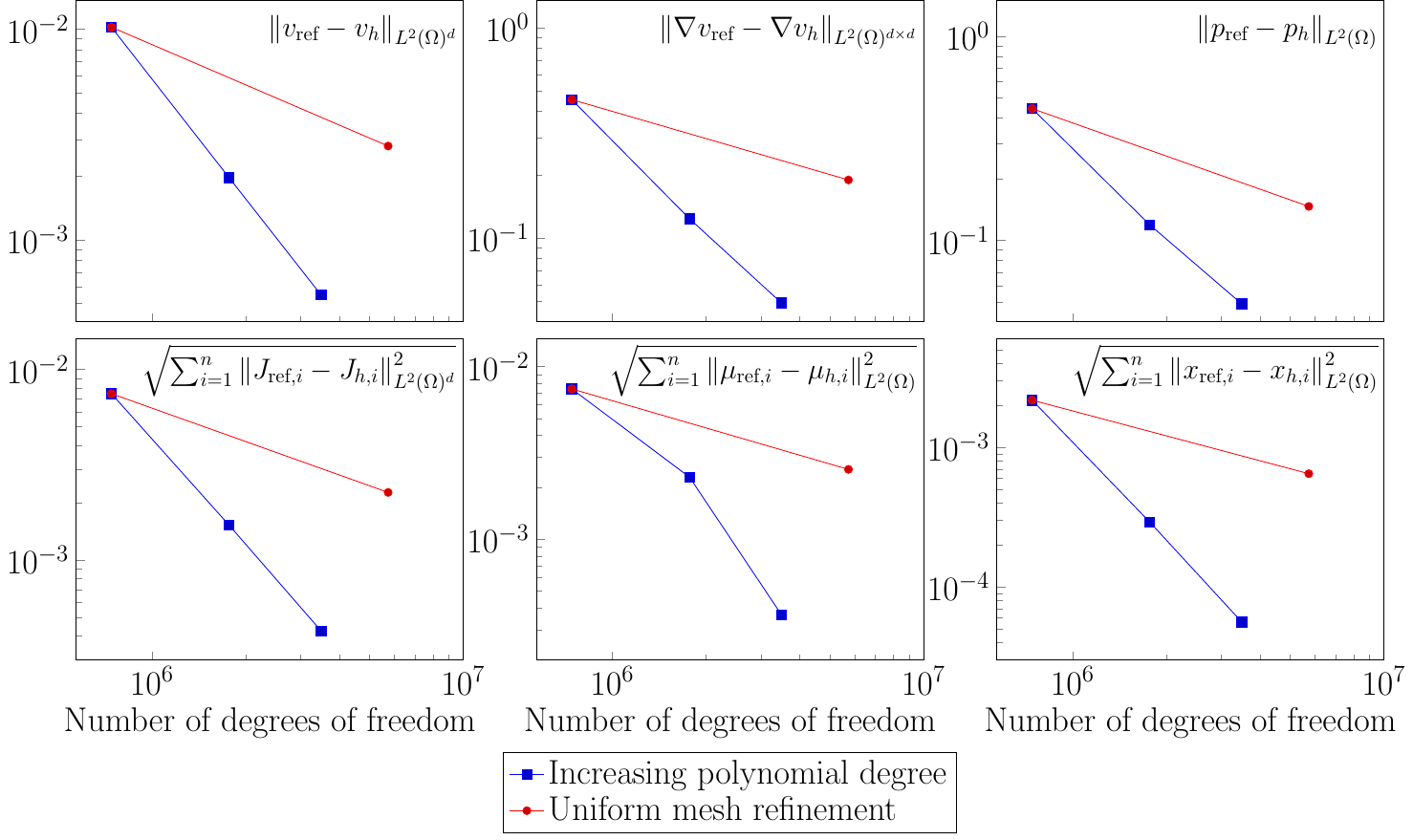}
\end{figure}

\subsection{Comparison to a previous method} \label{ss:comparison}

We next compare our method to that of \cite{aznaran2024finite},
which, to the best of our knowledge, is the only other finite element method in 
the
literature for the SOSM equations in the non-ideal setting.
Since the discretization of \cite{aznaran2024finite} is limited to two-dimensional
low-order simulations, we use as a basis for comparison the two-dimensional
benzene-cyclohexane simulation from \cite{aznaran2024finite}.
The physical setup is similar to that of \cref{sec:benzene_cyclohexane} and we
refer to \cite[sect.~4.5]{aznaran2024finite} for the details.

For the low-order method of \cite{aznaran2024finite} we employ a 
non-curved mesh consisting of 10503 triangles.
For our method we employ a curved mesh of order 4 with 3164 triangles
and we employ a discretization of order $k=4$.
We have chosen the resolution of these meshes so that when we compare the two
methods they result in linear systems of approximately the same size; 
see below.
In both cases we use ngsPETSc \cite{ngspetsc,schoberl1997netgen} to mesh the  
domain.
The method of \cite{aznaran2024finite} employs Picard iteration, which, due to our
choice of mesh, requires that a linear system with $3.49 \times 10^5$ DOFs be 
solved at each iteration.
For our method we employ the monolithic Newton scheme (without continuation) as 
described in \cref{sec:newton}, for which the resulting Newton linearized systems 
to be solved at each iteration have $3.51 \times 10^5$ DOFs.
In both cases we choose an equimolar mixture as an initial guess in the iterative 
schemes.

The two discretizations that we consider result in different nonlinear systems of 
equations to be solved.
The nonlinear equations are solved using Picard iteration in 
\cite{aznaran2024finite} and Newton iteration for our method.
To quantify how well these iterative schemes converge,
we measure the $L^2$-norm of the update in the (nondimensionalized) concentration 
fields at each iteration, and we terminate once this falls below $10^{-10}$.
We have chosen to measure this quantity because it is readily computable for both 
of the methods, despite the fact that they are based on different formulations 
and choices of unknowns in the SOSM equations.
We also measure $L^2$-errors in the (nondimensionalized) mass-average 
constraint at each iteration.

\begin{figure}[h] \label{fig:comparison_easy}
\caption{Plot of the concentration updates and mass-average constraint errors
in the $L^2$-norm, versus the number of iterations, for the experiment of 
\cref{ss:comparison} with
$v_1^{\textrm{ref}} = 0.4 \ \textrm{{\textmu}m} / \textrm{s}$.
We compare our method with the previous method of \cite{aznaran2024finite}.
The dashed grey line represents the termination
condition of $10^{-10}$ for the concentration updates in the $L^2$-norm.
}
\centering
\includegraphics[width=1.0\textwidth]{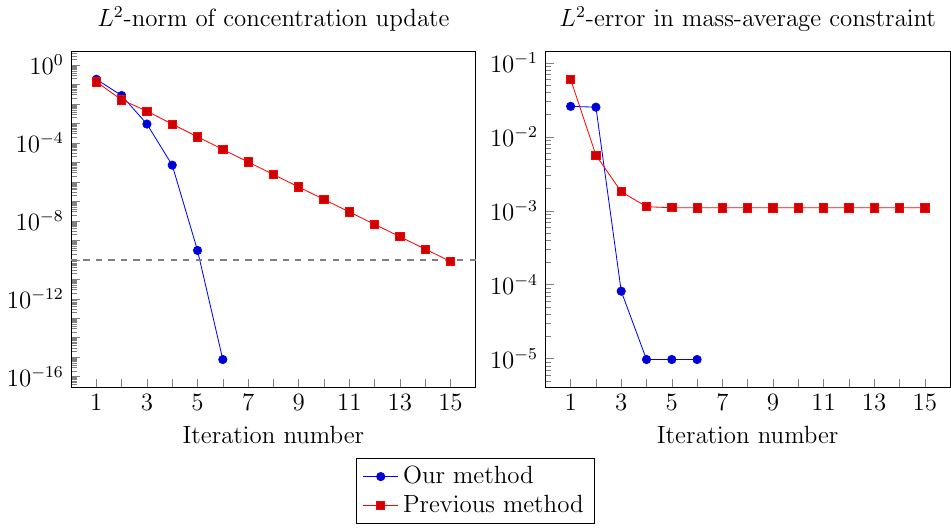}
\end{figure}

We first carry out this experiment with a benzene reference inflow velocity of
$v_1^{\textrm{ref}} = 0.4 \ \textrm{{\textmu}m} / \textrm{s}$.
The results of this experiment are plotted in \Cref{fig:comparison_easy}.
The concentration update plot indicates that both methods successfully converge, 
with our method doing so quadratically (as expected of Newton's method),
and the method of \cite{aznaran2024finite} linearly (as expected of Picard 
iteration).
The quadratic convergence of Newton's method means that our scheme converges much 
faster than that of \cite{aznaran2024finite}; we converge in 6 iterations whereas 
\cite{aznaran2024finite} converges in 15.
Moreover, our high-order discretization achieves a significantly lower $L^2$-error 
in the mass-average constraint, despite the fact that both schemes result in 
linear systems with roughly the same number of DOFs.
We ran the experiment of this subsection in parallel on 8 cores with an
Intel Core i9-10920X processor, and we solved the linear systems with
MUMPS \cite{amestoy2001fully}.
The linear solve runtime was similar for both methods.
For our method the average runtime was 2.0 seconds, where we have accounted for 
the additional computational time resulting from applying the Woodbury formula
(recall \cref{sec:newton}).
For the method of \cite{aznaran2024finite} the average runtime was 2.4 seconds.

Finally, we repeat this experiment with a larger benzene reference 
inflow velocity of
$v_1^{\textrm{ref}} = 4.0 \ \textrm{{\textmu}m} / \textrm{s}$.
This results in a nonlinear system that is appreciably more difficult to solve,
as demonstrated in \cref{fig:comparison_hard}.
Indeed, our method converges but now requires 8 iterations to do so.
Moreover, for the method of \cite{aznaran2024finite} we employ the 
under-relaxation strategy outlined in 
\cite[sect.~4.5]{aznaran2024finite} to aid with nonlinear convergence,
but we find that for both small and moderate under-relaxation parameters 
$\epsilon \in \{ 0.1, 0.5 \}$
the method fails to converge and the mass-average error
does not decrease with the iteration count.
These findings indicate that our Newton scheme is more robust than Picard 
iteration when solving more challenging problems.
In this case the linear solve runtimes were again similar for both methods;
we recorded an average runtime of 1.9 seconds for our method and 2.4 seconds 
for 
that of \cite{aznaran2024finite}.

\begin{figure}[h] \label{fig:comparison_hard}
	\caption{Plot of the concentration updates and mass-average constraint errors
		in the $L^2$-norm, versus the number of iterations, for the experiment of 
		\cref{ss:comparison} with
		$v_1^{\textrm{ref}} = 4.0 \ \textrm{{\textmu}m} / \textrm{s}$.
		We compare our method with the previous method of \cite{aznaran2024finite} 
		using an under-relaxation parameter $\epsilon$.
		The dashed grey line represents the termination
		condition of $10^{-10}$ for the concentration updates in the $L^2$-norm.}
	\centering
	\includegraphics[width=1.0\textwidth]{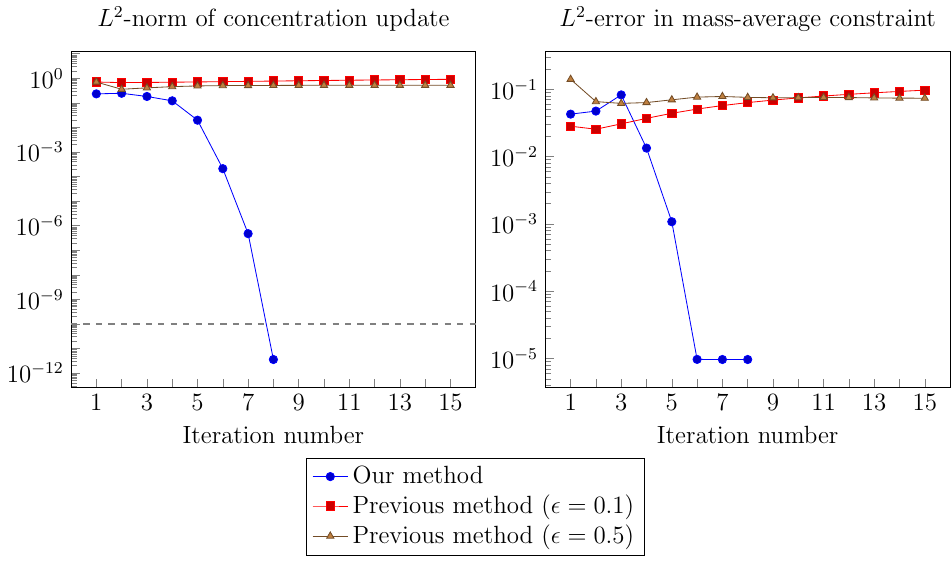}
\end{figure}

\section{Conclusions} \label{sec:conclusions}

In this paper we have introduced and analyzed a large family of finite element 
methods for solving the SOSM equations, which model bulk momentum transport and 
multicomponent diffusion in concentrated mixtures.
To the best of our knowledge, this is the first paper in the finite element
literature that introduces a discretization for these equations that is applicable 
to non-ideal mixtures and is straightforward to implement in two and three spatial 
dimensions.
At a theoretical level, we studied a Picard linearization of the SOSM problem
and we proved that our discretizations are convergent and quasi-optimal in this 
linearized setting.
We also showed how the methods can be extended to the full nonlinear SOSM problem 
and how the resulting monolithic system can be solved using Newton's method.
Numerical experiments substantiated our theory and demonstrate that the 
methods achieve high-order spatial convergence rates.
Our numerical experiments also suggest that, for a given computational cost,
our high-order discretizations in conjunction with Newton's method outperform 
previous low-order approaches in both accuracy and robustness.

\bibliographystyle{siamplain}

\end{document}